\definecolor{webgreen}{rgb}{0,.5,0}
\definecolor{webbrown}{rgb}{.6,0,0}
\newcommand{\R}{{\mathbb R}}
\theoremstyle{plain}
\numberwithin{equation}{section}
\newtheorem{thm}{Theorem}[section]
\newtheorem{theorem}[thm]{Theorem}
\newtheorem{coro}[thm]{Corollary}
 \newcommand{\seqnum}[1]{\href{https://oeis.org/#1}{\underline{#1}}}
\newcommand{\eqn}[1]{(\ref{#1})}
\newcommand{\beql}[1]{\begin{equation}\label{#1}}
\newcommand{\eeq}{\end{equation}}
\begin{document}


\setcounter{page}{1}


\begin{center}

{\large\bf The Binary Two-Up Sequence  } \\
\vspace*{+.2in}

Michael De Vlieger, \\
Vinci Designs, 
5750 Delor St., St Louis, MO 63109, USA \\
Email:  \href{mailto:mike@vincico.com}{\tt mike@vincico.com}

\vspace*{+.1in}

Thomas Scheuerle, \\
Leonbergerstrasse 64/8, 71292 Friolzheim, GERMANY \\
Email:  \href{mailto:Thomas.Scheuerle@kabelbw.de}{\tt Thomas.Scheuerle@kabelbw.de}

\vspace*{+.1in}

R\'{e}my Sigrist, \\
3 rue de la Somme,
 67000 Strasbourg, FRANCE \\
 Email:  \href{mailto:remyetc9@gmail.com}{\tt remyetc9@gmail.com}
  
\vspace*{+.1in}

N. J. A. Sloane,\\ 
The OEIS Foundation Inc., 
11 South Adelaide Ave.,
Highland Park, NJ 08904, USA \\
Email:  \href{mailto:njasloane@gmail.com}{\tt njasloane@gmail.com}

\vspace*{+.1in}

Walter Trump,\\
 Reichelsdorfer Schulgasse 18, 90453 N\"{u}rnberg, GERMANY \\
Email:  \href{mailto:walter@trump.de}{\tt walter@trump.de}

\vspace*{+.3in}



\begin{abstract}
The Binary Two-Up Sequence is the lexicographically earliest sequence of distinct nonnegative integers with the property that the binary expansion of the $n$-th term has no $1$-bits in common with any of the previous $\lfloor \frac{n}{2} \rfloor$  terms.
We show that the sequence can be decomposed into ``atoms'', which are sequences of $4$, $6$, or $8$ numbers whose binary expansions match certain patterns, and that the sequence is the limiting form of a certain ``word'' involving the atoms. This leads to a fairly explicit formula for the terms, and in particular establishes the conjecture that every nonzero term is the sum of at most two powers of $2$.
\end{abstract}
\end{center}



\section{Overview}\label{Sec1}
We will say that two numbers $u$ and $v$ are {\em perpendicular}, written $u \perp v$,
if their binary representations have no $1$'s in the same bit-position.   For example, $9 = 1001_2 \perp 6 = 110_2$.
 The most explicit definition of our sequence $A = \{a(n), n \ge 0\}$ is:
 $a(0) = 0, a(1) = 1, a(2) = 2$, then,
 for each $k \ge  2$, given $a(k)$, the sequence is extended by appending two terms:
 $a(2k-1)$, which  is the  smallest integer $m \ge 0$ not among $a(0), \ldots, a(k)$ such that 
 $\{m, a(k), a(k+1), \ldots, a(2k-2)\}$ are pairwise perpendicular, and
 $a(2k)$, which is the smallest integer $m' \ge 0$ not among $a(0), \dots, a(k)$ such that 
$\{m', a(k), \ldots, a(2k-1)\}$ are pairwise perpendicular. 

Since it grows by two terms at each step, 
we call $A$ the {\em Binary Two-Up Sequence}.\footnote{It has nothing to do with the notorious Australian coin-tossing game of Two-Up.} It is entry \seqnum{A354169} in \cite{OEIS}.
 
In Section~\ref{Sec2} we discuss several equivalent definitions and study the first few terms.

In Section~\ref{Sec3} we examine the initial $2^{25}$ terms, and observe that they have a natural decomposition into five kinds of ``atoms'', labeled  $T, U, V, W, X$,  which are sequences of $4$, $6$, or $8$ numbers whose
binary expansions match certain patterns. 
The atoms are described in detail in Section~\ref{SecAtoms}.

Our main theorem (see Section~\ref{SecMainTh}) shows
that the sequence $A$ is the limiting form of a certain ``word'' involving the atoms. 
A corollary is that no term is the sum of more than two powers of $2$.

In Section~\ref{Sec5} we number all the atoms in $A$ in a systematic way.
Then  in Section~\ref{SecSpectra} we  use the main theorem to 
give the ``spectra'' of the atoms, that is, a precise list of the terms that make up each type of atom.
From this we can give a fairly explicit formula for the general term $a(n)$ of the sequence. 
It is not completely explicit, however, because in certain infrequent cases one of the terms in the atoms of type  $W$ involves a term in  an earlier atom of the same type. It is this modest amount of recursiveness that makes the sequence  interesting and yet solvable.
The analogous number-theoretic Two-Up Sequence \seqnum{A090252} has so far resisted 
all our attempts to analyze it.\footnote{Is there a way to define atoms there?  We do not know.}

In Section~\ref{SecSpectra} we also explain why the graph of the sequence has such a simple appearance.

In Section~\ref{SecSubsid},  we also use the  main theorem to establish  conjectured formulas for some related sequences, such as the indices of the terms that  are sums of two powers of $2$. 

\vspace{0.1in}
\noindent \textbf{Notation.} 
Throughout this article, the sequence \seqnum{A354169} will be denoted by $A$, and its $n$-th term by $a(n)$,  for $n \ge 0$.  Sequences with names like \seqnum{A354169}, that is, ``A'' followed by a six-digit number, refer to entries in  
the {\em On-Line Encyclopedia of Integer Sequences} \cite{OEIS}.

Two nonnegative integers $u$ and $v$ will be said to be ``perpendicular'', written $u \perp v$,  if their binary representations have no 1's in the same bit-position.   For example, $9 \perp 6$,  since their binary expansions $1001$ and $110$ have no common 1's. But 9 and 5 are not perpendicular, since $1001$ and $101$ do have a common 1.

We will often indicate the value of $a(n)$ by writing it as a sum of powers of 2 and listing the exponents in square brackets. For example $a(8) = 64 = 2^6 = [6]$, $a(9) = 12 = 2^2 + 2^3 = [2, 3]$.

The ``Hamming weight'' or simply  ``weight'' of a nonnegative number is the number of 1's in its binary expansion (cf. \seqnum{A000120}).

When we discuss the decomposition of $A$ into atoms, we will silently borrow notation from ``Combinatorics on 
Words'' \cite{Loth83}, and use ``words'' over the alphabet $\{T, U, V, W, X\}$. We  usually separate the letters (e.g. atoms)  in a word by either spaces or commas.

It  turns out that much of the structure of  $A$ is controlled by the sequence \seqnum{A029744}, which is a list of the numbers of the form $2^k$ and $3 \cdot 2^k$.  We denote the $m$-th term of this sequence by $\mu(m)$.
The initial values are:
\beql{Eqmu}
 \begin{array}{rrrrrrrrrrrrrrrrrrrrrrrr}
m: & 1 & 2 & 3 & 4 & 5 & 6 & 7 & 8 & 9 & 10 & 11 & 12 & 13 & 14 & 15 & 16 & 17 &  \cdots \\
\mu(m): & 1 & 2 & 3 & 4 & 6 & 8 & 12 & 16 & 24 & 32 & 48 & 64 & 96 & 128 & 192 & 256 & 384 & \cdots \\
\end{array}
\eeq
The formulas for the atoms (see Section~\ref{SecSpectra}) make heavy use of  this function.


\section{Definition of the Binary Two-Up Sequence}\label{Sec2}

There are several equivalent definitions of the sequence. (The equivalence of the four definitions is easily established.) The first definition, the ``Two-Up'' version,  is the most explicit.

\textbf{Definition 1:}
$a(0) = 0, a(1) = 1, a(2) = 2$; then,
        for each $k \ge  2$, given $a(k)$, the sequence is extended by appending two terms:
        $a(2k-1)$, which  is the  smallest integer $m \ge 0$ not among $a(0), \ldots, a(k)$ such that 
        $\{m, a(k), a(k+1), \ldots, a(2k-2)\}$ are pairwise perpendicular, and
        $a(2k)$, which is the smallest integer $m' \ge 0$ not among $a(0), \dots, a(k)$ such that $\{m', a(k), \ldots, a(2k-1)\}$ are pairwise perpendicular.
        
We can think of this process as an algorithm which takes the terms  $a(0), a(1), \ldots, a(k)$  as input and produces  $a(0), a(1), \ldots, a(2k-1), a(2k)$  as output. A354169 is the limiting sequence as $k$ goes to infinity.

A consequence of Definition 1, and also an equivalent definition, is:

\textbf{Definition 2:}
         $A$ is the lexicographically earliest infinite sequence of distinct nonnegative integers with the property that $a(n)$ is perpendicular to each of the next $n$ terms.

(See \cite{Yellow} for background information about ``lexicographically earliest sequences''.)

Another equivalent definition is:

\textbf{Definition 3:}
        $A$ is the lexicographically earliest infinite sequence of distinct nonnegative integers with the property that $a(n)$ is perpendicular to each of the previous $\lfloor n/2 \rfloor$ terms.

Yet another equivalent version, perhaps the easiest to remember, is:

\textbf{Definition 4:}
\begin{align}\label{Eq1}
\mbox{For~} n \ge 0, a(n) & \mbox{~is~the~smallest~nonnegative~integer~not~yet~in~the~sequence}  \nonumber  \\
& \mbox{which~is~perpendicular~to~each~of~} a(\lceil n/2 \rceil),   \ldots, a(n-1). 
\end{align}

Terms $a(0)$ through $a(17)$ of $A$ are:
$$
0, 1, 2, 4, 8, 3, 16, 32, 64, 12, 128, 256, 512, 17, 1024, 34, 2048, 4096.
$$
One can see that these terms are a mixture of 0, powers of 2, and sums of two smaller powers of 2. It will turn out that this holds in general, although we will not know that until we have proved the main theorem.

The initial terms of $A$ are somewhat exceptional. Although the decomposition into atoms begins at $a(1)$, 
there are irregularities which persist until $a(565)$, and
our explicit formulas for the terms in  the atoms (see Section~\ref{SecSpectra}) assume $n \ge 566$.  The OEIS entry \seqnum{A354169} contains an explicit list of the first 4941 terms, and further information can be found in  
sequences
\seqnum{A354680}, \seqnum{A354767}, \seqnum{A354773}, \seqnum{A354774}, \seqnum{A354798}, \seqnum{A355150}.

Table~\ref{Tab18terms} shows  
shows the construction of the initial terms, working directly from Definition~1.
We are given $a(0)=0$, $a(1)=1$, and $a(2)=2$. Taking $a(2)$ as the controlling term,
first $a(3)$ must be perpendicular to $a(2)$, and we get $a(3)=4$, and then $a(4)$ must be perpendicular to 
$a(2)$ and $a(3)$, and we get $a(4)=8$.
Next we take $a(3)=4$ as the controlling term, and get $a(5)$ and $a(6)$, and so on.
The columns in Table~\ref{Tab18terms}, reading from right to left,  give $n$, $a(n)$, the term $a(k)$ that is controlling the terms $a(2k-2)$ and $a(2k-1)$, and $a(n)$ written in base~$2$.\footnote{A  larger illustration showing the binary expansion of the first $190$ terms can be found here:
\url{https://oeis.org/A354169/a354169_1.pdf}.}

\begin{table} [!htb]
\caption{The start of the construction of the sequence $A$.
Reading from right to left,  the columns give $n$, $a(n)$, the term $a(k)$ that is controlling the terms $a(2k-2)$ and $a(2k-1)$, and $a(n)$ written in base $2$.}\label{Tab18terms}
$$
\begin{array}{|rrrrrrrrrrrrrr|r|r|r|}
\hline
\multicolumn{14}{|c|}{a(n) \mbox{~in~binary}} & a(k) & a(n) & n \\
\hline
~  & ~  & ~  & ~  & ~  & ~  & ~  & ~  & ~  & ~  & ~  & ~  & ~  & 0  &   ~ & 0 &  0 \\
~  & ~  & ~  & ~  & ~  & ~  & ~  & ~  & ~  & ~  & ~  & ~  & ~  & 1  &   ~ & 1 &  1 \\
~  & ~  & ~  & ~  & ~  & ~  & ~  & ~  & ~  & ~  & ~  & ~  & 1  & 0  &   ~ & 2 &  2 \\
\hline
~  & ~  & ~  & ~  & ~  & ~  & ~  & ~  & ~  & ~  & ~  & 1  & 0  & 0  &   2 & 4 &  3 \\
~  & ~  & ~  & ~  & ~  & ~  & ~  & ~  & ~  & ~  & 1  & 0  & 0  & 0  &   ~ & 8 &  4 \\
\hline
~  & ~  & ~  & ~  & ~  & ~  & ~  & ~  & ~  & ~  & ~  & ~  & 1  & 1  &   4 & 3 &  5 \\
~  & ~  & ~  & ~  & ~  & ~  & ~  & ~  & ~  & 1  & 0  & 0  & 0  & 0  &   ~ & 16 &  6 \\
\hline
~  & ~  & ~  & ~  & ~  & ~  & ~  & ~  & 1  & 0  & 0  & 0  & 0  & 0  &   8 & 32 &  7 \\
~  & ~  & ~  & ~  & ~  & ~  & ~  & 1  & 0  & 0  & 0  & 0  & 0  & 0  &   ~ & 64 &  8 \\
\hline
~  & ~  & ~  & ~  & ~  & ~  & ~  & ~  & ~  & ~  & 1  & 1  & 0  & 0  &   3 & 12 &  9 \\
~  & ~  & ~  & ~  & ~  & ~  & 1  & 0  & 0  & 0  & 0  & 0  & 0  & 0  &   ~ & 128 &  10 \\
\hline
~  & ~  & ~  & ~  & ~  & 1  & 0  & 0  & 0  & 0  & 0  & 0  & 0  & 0  &   16 & 256 &  11 \\
~  & ~  & ~  & ~  & 1  & 0  & 0  & 0  & 0  & 0  & 0  & 0  & 0  & 0  &   ~ & 512 &  12 \\
\hline
~  & ~  & ~  & ~  & ~  & ~  & ~  & ~  & ~  & 1  & 0  & 0  & 0  & 1  &   32 & 17 &  13 \\
~  & ~  & ~  & 1  & 0  & 0  & 0  & 0  & 0  & 0  & 0  & 0  & 0  & 0  &   ~ & 1024 &  14 \\
\hline
~  & ~  & ~  & ~  & ~  & ~  & ~  & ~  & 1  & 0  & 0  & 0  & 1  & 0  &   64 & 34 &  15 \\
~  & ~  & 1  & 0  & 0  & 0  & 0  & 0  & 0  & 0  & 0  & 0  & 0  & 0  &   ~ & 2048 &  16 \\
\hline
~  & 1  & 0  & 0  & 0  & 0  & 0  & 0  & 0  & 0  & 0  & 0  & 0  & 0  &   12 & 4096 &  17 \\
1  & 0  & 0  & 0  & 0  & 0  & 0  & 0  & 0  & 0  & 0  & 0  & 0  & 0  &   ~ & 8192 &  18  \\
\hline
\end{array}
$$
\end{table}

\textbf{Remark 2.1:} The perpendicular condition states that:\\
  \indent  ~~~$a(2k-1) \perp ~a(k), \ldots, a(2k-2)$, and\\
  \indent  ~~~$a(2k) ~~~~\perp ~a(k), \ldots, a(2k-1)$. 
  
In particular, writing $k$ for $a(k)$ (so that the third line, for example, means $a(4) \perp a(2), a(3)$), we have:
$$
\begin{array} {crrrrrrrr}
2 & \perp & 1 \\
3 & \perp & 2 \\
4 & \perp & 2 & 3 \\
5 & \perp & 3 & 4 \\
6 & \perp & 3 & 4 & 5 \\
7 & \perp & 4 & 5 & 6 \\
8 & \perp & 4 & 5 & 6 & 7 \\
9 & \perp & 5 & 6 & 7 & 8 \\
10 & \perp & 5 & 6 & 7 & 8 & 9 \\
11 & \perp & 6 & 7 & 8 & 9 & 10 \\
12 & \perp & 6 & 7 & 8 & 9 & 10 & 11 \\
13 & \perp & 7 & 8 & 9 & 10 & 11 & 12 \\
14 & \perp & 7 & 8 & 9 & 10 & 11 & 12 & 13 \\
15 & \perp & 8 & 9 & 10 & 11 & 12 & 13 & 14 \\
...
\end{array}$$
\noindent
(It is  helpful to have these rules stated explicitly when  studying the sequence.)

\textbf{Remark 2.2:}  \seqnum{A354169} is a set-theory analog of \seqnum{A090252}. It bears the same relation to \seqnum{A090252} as \seqnum{A252867} does to \seqnum{A098550}, \seqnum{A353708} to \seqnum{A121216}, \seqnum{A353712} to \seqnum{A347113},
etc.


\section{Properties of  A, the construction log, and  ``atoms''}\label{Sec3}


\begin{table}[!ht]
\caption{Log file for $A354169$.}\label{TableLog}
\scriptsize
$$
 \begin{array} {llll}
 \hline
A1 & \mbox{ found } & a(1)=[0] & \mbox{start of atom } T \\
A1 & \mbox{ found } & a(2)=[1] \\
F1 & \mbox{ freed } & a(1)=[0] \\
A1 & \mbox{ found } & a(3)=[2] \\
A1 & \mbox{ found } & a(4)=[3] \\
F1 & \mbox{ freed } & a(2)=[1]  & \mbox{end of atom } T \\
 \hline
A2 & \mbox{ found } & a(5)=[0, 1] & \mbox{start of atom } U(0) \\
A1 & \mbox{ found } & a(6)=[4] & \\
F1 & \mbox{ freed } & a(3)=[2] \\
A1 & \mbox{ found } & a(7)=[5] \\
A1 & \mbox{ found } & a(8)=[6] \\
F1 & \mbox{ freed } & a(4)=[3] &  \mbox{end of atom } U(0) \\
 \hline
A2 & \mbox{ found } & a(9)=[2, 3] & \mbox{start of atom } V(-1) \\
A1 & \mbox{ found } & a(10)=[7] \\
F2 & \mbox{ freed } & a(5)=[0, 1] \\
A1 & \mbox{ found } & a(11)=[8] \\
A1 & \mbox{ found } & a(12)=[9] \\
F1 & \mbox{ freed } & a(6)=[4] \\
A2 & \mbox{ found } & a(13)=[0, 4] \\
A1 & \mbox{ found } & a(14)=[10] \\
F1 & \mbox{ freed } & a(7)=[5]  & \mbox{end of atom } V(-1) \\
 \hline
A2 & \mbox{ found } & a(15)=[1, 5] & \mbox{start of atom } W(-1) \\
A1 & \mbox{ found } & a(16)=[11] \\
F1 & \mbox{ freed } & a(8)=[6] \\
A1 & \mbox{ found } & a(17)=[12] \\
A1 & \mbox{ found } & a(18)=[13] \\
F2 & \mbox{ freed } & a(9)=[2, 3] \\
A2 & \mbox{ found } & a(19)=[2, 6] \\
A1 & \mbox{ found } & a(20)=[14] \\
F1 & \mbox{ freed } & a(10)=[7]  & \mbox{end of atom } W(-1) \\
 \hline
A2 & \mbox{ found } & a(21)=[3, 7] & \mbox{start of atom } U(1) \\
A1 & \mbox{ found } & a(22)=[15] \\
F1 & \mbox{ freed } & a(11)=[8] \\
A1 & \mbox{ found } & a(23)=[16] \\
A1 & \mbox{ found } & a(24)=[17] \\
F1 & \mbox{ freed } & a(12)=[9] & \mbox{end of atom } U(1) \\
 \hline
A2 & \mbox{ found } & a(25)=[8, 9] & \mbox{start of atom } X(2) \\
A1 & \mbox{ found } & a(26)=[18]  & \\
F2 & \mbox{ freed } & a(13)=[0, 4] \\
A1 & \mbox{ found } & a(27)=[19] \\
A1 & \mbox{ found } & a(28)=[20] \\
F1 & \mbox{ freed } & a(14)=[10] \\
A2 & \mbox{ found } & a(29)=[0, 10] \\
A1 & \mbox{ found } & a(30)=[21] \\
F2 & \mbox{ freed } & a(15)=[1, 5] \\
A2 & \mbox{ found } & a(31)=[1, 4] \\
A1 & \mbox{ found } & a(32)=[22] \\
F1 & \mbox{ freed } & a(16)=[11] & \mbox{end of atom } X(2) \\
 \hline
A2 & \mbox{ found } & a(33)=[5, 11] & \mbox{start of atom } U(2) \\
A1 & \mbox{ found } & a(34)=[23] & \\
F1 & \mbox{ freed } & a(17)=[12] \\
\ldots &\ldots & \ldots & \ldots
 \end{array}
$$
\normalsize
\end{table}

Starting in this section, we will usually ignore the initial term $a(0)=0$, since it plays no role in the main theorem and,
since it is not a sum of powers of $2$,  would just complicate the proof. So from now on we will usually think of $A$ as consisting of the terms $\{a(n), n \ge 1\}$.\footnote{The term $a(0)=0$ exists because the sequence is a set-theory based lexicographically  earliest sequence, and the initial term represents the empty set.}

We start with some general remarks about the sequence. 

It is easy to see that $A$ exists. This is follows from the fact that there is always a candidate for $a(n)$, namely that power of $2$ that is just greater than all the previous terms. So by Definition 4, $a(n)$ always exists.

Furthermore, since now we know the sequence is infinite and contains no repeated terms, the binary expansions of the terms must contain higher and higher powers of $2$. The first time a term contains $2^e$ in its binary expansion, that term will actually equal $2^e$ (since we always choose the smallest possible value).  We can conclude that for every $e \ge 0$  there is an $n$ such that $a(n) = 2^e$ (see \seqnum{A354767}), and that these powers of 2 appear in increasing order.

\subsection{The log file.}\label{SecLog}
In order for a term $a(n)$ that is not a power of $2$ to appear, the 1's in its binary expansion have to avoid the 1's in the binary expansions of $a(\lceil\frac{n}{2}\rceil)$  through $a(n-1)$. Understanding how this happens is the key to understanding the sequence, and in order to do so it is helpful to keep a log of the construction process. The beginning of this log is shown  in Table~\ref{TableLog}.

The log records when a new value of $a(n)$ is found, and when the powers of $2$ in an earlier term are free to be reused. From~\eqn{Eq1} we see that once the value of $a(2t)$ has been determined, $a(2t+1), a(2t+2), \ldots$ no longer have to be perpendicular to $a(t)$, and so the powers of $2$ in the binary expansion of $a(t)$ can be freed for reuse.
We keep track of this by recording which $1$'s in the binary expansions can be reused.

The log has two kinds of entries:  $Aw$ means that a term of weight $w$ has been found, and   $Fw$ means the exponents in an earlier term of weight $w$ have been freed.
 
So a line like\footnote{In the log and in most of the later tables, we represent numbers using the square bracket notation mentioned in the Introduction.}
$$
   A1 \mbox{~found~} a(8) = [6]
$$
in the log indicates that we have found $a(8) = 2^6$, $A1$ indicating that this has weight 1.
A line like
$$
   A2 \mbox{~found~} a(9) = [2,  3]
$$
indicates that we have found $a(9) = 2^2 + 2^3 = 12$, $A2$ indicating that this has weight 2.
If terms of weight greater than 2 appeared, they would be indicated by $A3, A4, \ldots$. They are permitted by the notation, but as we will show, they do not happen.

A line like
$$
   F1 \mbox{~freed~} a(4) = [3]
$$
indicates the term $2^3$ in $a(4)$ is now free to be reused.
A line like
$$
   F2 \mbox{~freed~} a(5) = [0, 1]
$$
indicates the powers of $2$ $2^0$ and $2^1$ in $a(5)$ are now free to be reused.
If terms of weight greater than 2 were freed, they would be indicated by $F3, F4, \ldots$. They are permitted by the notation, but do not happen.

\subsection{The construction.}\label{SecConstruction}
The construction of $A$ can now be restated as follows:
\begin{itemize}
\item We keep track of the set of free 1's.
\item We check if a subset of the free 1's can be added to form a value not yet in the sequence, and if so  take the smallest such value to be the next term (this value necessarily has weight at least 2).
\item If that is not the case, the next term is a new power of 2.
\end{itemize}


\begin{table}[!ht]
  \caption{Construction of first six terms of $A$.}\label{Tab6terms}
$$
\begin{array}{|c|c|l|}
\hline
n & a(n) & \mbox{ free } 1's \\
\hline
~ & ~ &  \{ \}  \\
\hline
1 & \color{red}{2^0}  & ~ \\
\hline
2 & {\color{blue}{2^1}}  & ~ \\
\hline
~ & ~ & \{ {\color{red}{2^0}} \} ( {\color{red}{2^0}} \mbox{ comes from~} a(1)) \\
\hline
3 & 2^2 & ~ \\
\hline
4 & 2^3 & ~ \\
\hline
~ & ~ & \{{\color{red}{2^0}} , {\color{blue}{2^1}} \} (2^1 \mbox{ comes from~} a(2))  \mbox{ [See Note]} \\
\hline
\hline
5 &  {\color{red}{2^0}}  +{\color{blue}{2^1}}  & ~ \\
\hline
~ & ~ & \{ \}  \mbox{ No free $1$'s remain.} \\
\hline
6 & 2^4 & ~ \\
\hline
\end{array}
$$
[Note: At this point, we have two free $1$'s. As they came from terms that are powers of $2$, they have never been combined before, and we can add them to produce the next term. The first atom, $T$, ends here.]
\end{table}

\noindent
\textbf{Remarks about the freeing and construction steps.}
\begin{enumerate}
\item Table~\ref{Tab6terms}  illustrates the process for the first six terms of $A$.
\item The new powers of 2 occur in order, and no power of 2 is skipped.
\item In order for a term of weight 2 or more to exist, there must be at least two free powers of 2.
\item The freed terms are just the terms of the sequence itself, at about $n/2$ steps earlier, taken in order, and used exactly once.
\item As long as we have not yet seen any term of weight greater than 2, the freed terms will also have weight at most 2.  As long as we have not seen two weight 2 terms in succession, after a weight 2 term has been freed, the next freed term will have weight 1.
\item When a weight 1 term $2^e$, say, is freed , we can be certain that up to this point the sequence does not contain a term $2^e + x, x \ne 0, x \ne 2^e$. This observation will often be useful for deciding which candidate for $a(n)$ is the smallest.
\item In the first $2^{25}$ terms there are never more than three free powers of 2 at any time (it will turn out that this is always true). If there are three free powers of 2 there is always a unique way to add two of them to get the next term.
\end{enumerate}

\subsection{Atoms.}\label{SecAtoms}

We split the sequence $A$ into blocks called ``atoms'', by declaring that an atom ends whenever there are exactly two distinct free 1's that can be added to form a new term. That is, the current atom ends just before those two 1's are combined. We indicate the end of an atom by drawing a horizontal line in the log.


\begin{table}
\caption{The codes for the five types of atoms.}\label{TableCodes}
\begin{align}\label{EqAtoms}
T & ~= A1 A1 F1 A1 A1 F1  \,,  \nonumber \\
U & ~= A2 A1 F1 A1 A1 F1 \,,  \nonumber \\
V & ~= A2 A1 F2 A1 A1 F1 A2 A1 F1 \,,  \nonumber \\
W & ~= A2 A1 F1 A1 A1 F2 A2 A1 F1  \,,  \nonumber \\
X & ~= A2 A1 F2 A1 A1 F1 A2 A1 F2 A2 A1 F1 \,.
\end{align}
\end{table}

We describe the atoms by listing the sequence of symbols at the start of the lines in the log.
We will refer to this sequence as the ``code'' for the atom.
The first atom, for example, as we can see from Table~\ref{TableLog}, has  code $A1, A1, F1, A1, A1, F1$.
We call it an atom of type $T$. It is special, and only occurs once, at the start of the sequence.
(This is the only time there are four powers of $2$ in succession.)
The second atom is $A2, A1, F1, A1, A1, F1$, which we call a U atom.

When we examine the log file (see Table~\ref{TableLog}), we observe that just five types of atoms occur, which we denote by $T$, $U$, $V$, $W$, and $X$.

Here are the formal definitions of the atoms $T$, $U$, $V$, $W$, and $X$.  
They are subsequences of~$A$, always beginning with a term $a(i)$ where $i$ is odd, and characterized by 
a code which lists the pattern of weights and freed numbers (see Table~\ref{TableCodes}). 
In the code, the symbol $Aw$ means that a term of weight $w$ is created, 
and $Fw$ means that a term of weight $w$ is freed. Except for the $T$ atom, 
each atom begins with two free $1$'s, which are used to create $a(i)$. 
Now there are no free $1$'s, so $a(i+1)$ is the next free power of $2$. 
So $U$, $V$, $W$, and $X$ atoms all begin with $A2$, $A1$. 
Each atom ends with exactly two free $1$'s.

In Table~\ref{TableWts} we record  the (necessarily even) lengths of the atoms and  the successive Hamming weights of their terms. They have even lengths because numbers are freed only when $n$ is even.


\begin{table}[!ht]
  \caption{Lengths and weights of the five types of atoms.}\label{TableWts}
$$
\begin{array}{|c|c|l|}
\hline
\mbox{Atom} & \mbox{Length} & \mbox{Weights, in order of occurrence } \\
\hline
T & 4  & 1, 1, 1, 1 \\
\hline
U & 4 & 2, 1, 1, 1 \\
\hline
V & 6 & 2, 1, 1, 1, 2, 1 \\
\hline
W & 6 & 2, 1, 1, 1, 2, 1 \\
\hline
X & 8 &  2, 1, 1, 1, 2, 1, 2, 1 \\
\hline
\end{array}
$$
\end{table}

Note that the list of weights alone does not specify the atoms uniquely.
For example the weight sequence $2, 1, 1, 1, 2, 1$  could correspond to a $U$ atom (with extra terms), a $V$ atom, a $W$ atom, or the beginning of an $X$ atom. To determine the correct alternative we must consider the free 1's that appear in the atoms. We do this in the next five tables.

\vspace{0.1in}
\noindent \textbf{The $T$ atom: see Table~\ref{TableT}.} 



\begin{table}[!ht]
  \caption{The unique $T$ atom, terms $a(1)-a(4)$.}\label{TableT}
$$
\begin{array}{|c|l|c|}
\hline
\mbox{Input terms (supplying free 1's)} & \mbox{Atom }  T & \mbox{Weights } \\
\hline
~ & \mbox{At this point we have no free 1's} & ~ \\
\hline
a(1) = {\color{red}{2^0}} & a(1) = 2^0 &  1 \\ \cline{2-3} 
                  & a(2) = 2^1 & 1 \\
\hline
a(2) = {\color{blue}{2^1}} & a(3) = 2^2 &  1 \\ \cline{2-3} 
                  & a(4) = 2^3 & 1 \\
\hline
~ & \mbox{There are now two free 1's}, {\color{red}{2^0}} \mbox{ and } {\color{blue}{2^1}}  & ~ \\
\hline
\end{array}
$$
\end{table}

\vspace{0.1in}
\noindent \textbf{The $U$ atoms.} 

 An example is shown in Table~\ref{TableU}. (This is $U(2)$ in the numbering system introduced in Section~\ref{Sec5}).


\begin{table}[!ht]
  \caption{An atom of type $U$, terms $a(21)-a(24)$.}\label{TableU}
$$
\begin{array}{|c|l|c|}
\hline
\mbox{Input terms (supplying free 1's)} & \mbox{Atom }  $U$ & \mbox{Weights } \\
\hline
~ & \mbox{At this point we have two free 1's }, 2^3 \mbox{ and } 2^7 & ~ \\
\hline
a(11) = {\color{red}{2^8}} & a(21) = 2^3 + 2^7 &  2 \\ \cline{2-3} 
                  & a(22) = 2^{15} & 1 \\
\hline
a(12) = {\color{blue}{2^9}} & a(23) = 2^{16} &  1 \\ \cline{2-3} 
                  & a(24) = 2^{17} & 1 \\
\hline
~ & \mbox{There are now two free 1's}, {\color{red}{2^8}} \mbox{ and } {\color{blue}{2^9}}  & ~ \\
\hline
\end{array}
$$
\end{table}

\vspace{0.1in}
\noindent \textbf{The $V$ atoms.} 

 An example (this is $V(-1)$) is shown in Table~\ref{Table5V}.
 

\begin{table}[!ht]
  \caption{An atom of type $V$, terms $a(9)-a(14)$.}\label{Table5V}
$$
\begin{array}{|c|l|c|}
\hline
\mbox{Input terms (supplying free 1's)} & \mbox{Atom }  $V$ & \mbox{Weights } \\
\hline
~ & \mbox{At this point we have two free 1's }, 2^2 \mbox{ and } 2^3 & ~ \\
\hline
a(5) = {\color{red}{2^0}} + {\color{blue}{2^1}}& a(9) = 2^2 + 2^3 &  2 \\ \cline{2-3} 
                  & a(10) = 2^{7} & 1 \\
\hline
a(6) = {\color{cyan}{2^4}}  & a(11) = 2^{8} &  1 \\ \cline{2-3} 
                  & a(12) = 2^{9} & 1 \\
\hline
a(7) = {\color{purple}{2^5}} & a(13) = {\color{red}{2^0}}   + {\color{cyan}{2^4}}&  2 \\ \cline{2-3} 
                  & a(14) = 2^{10} & 1 \\
\hline
~ & \mbox{There are now two free 1's}, {\color{blue}{2^1}} \mbox{ and } {\color{purple}{2^5}}  & ~ \\
\hline
\end{array}
$$
\end{table}

\vspace{0.1in}
\noindent \textbf{The $W$ atoms.} 

 An example  (this is $W(-1)$) is shown in Table~\ref{TableW}.


\begin{table}[!ht]
  \caption{An atom of type $W$, terms $a(15)-a(20)$.}\label{TableW}
$$
\begin{array}{|c|l|c|}
\hline
\mbox{Input terms (supplying free 1's)} & \mbox{Atom }  $W$ & \mbox{Weights } \\
\hline
~ & \mbox{At this point we have two free 1's }, 2^1 \mbox{ and } 2^5 & ~ \\
\hline
a(8) = {\color{red}{2^6}} & a(15) = 2^1 + 2^5 &  2 \\ \cline{2-3} 
                  & a(16) = 2^{11} & 1 \\
\hline
a(9) = {\color{blue}{2^2}} + {\color{cyan}{2^3}}  & a(17) = 2^{12} &  1 \\ \cline{2-3} 
                  & a(18) = 2^{13} & 1 \\
\hline
a(10) = {\color{purple}{2^7}} & a(19) = {\color{blue}{2^2}}   + {\color{red}{2^6}}&  2 \\ \cline{2-3} 
                  & a(20) = 2^{14} & 1 \\
\hline
~ & \mbox{There are now two free 1's}, {\color{cyan}{2^3}} \mbox{ and } {\color{purple}{2^7}}  & ~ \\
\hline
\end{array}
$$
\end{table}

\vspace{0.1in}
\noindent \textbf{The $X$ atoms.} 

 An example (this is $X(2)$)  is shown in Table~\ref{TableX}.


\begin{table}[!ht]
  \caption{An atom of type $X$, terms $a(25)-a(32)$.}\label{TableX}
$$
\begin{array}{|c|l|c|}
\hline
\mbox{Input terms (supplying free 1's)} & \mbox{Atom }  $X$ & \mbox{Weights } \\
\hline
~ & \mbox{At this point we have two free 1's }, 2^8 \mbox{ and } 2^9 & ~ \\
\hline
a(13) = {\color{red}{2^0}} + {\color{blue}{2^4}} & a(25) = 2^8 + 2^9 &  2 \\ \cline{2-3} 
                  & a(26) = 2^{18} & 1 \\
\hline
a(14) = {\color{olive}{2^{10}}}   & a(27) = 2^{19} &  1 \\ \cline{2-3} 
                  & a(28) = 2^{20} & 1 \\
\hline
a(15) = {\color{purple}{2^1}} + {\color{cyan}{2^5}} & a(29) = {\color{red}{2^0}} + {\color{olive}{2^{10}}} &  2 \\ \cline{2-3} 
                  & a(30) = 2^{21} & 1 \\
\hline
a(16) = {\color{teal}{2^{11}}} & a(31) = {\color{purple}{2^1}}   + {\color{blue}{2^4}}&  2 \\ \cline{2-3} 
                  & a(32) = 2^{22} & 1 \\
\hline
~ & \mbox{There are now two free 1's}, {\color{cyan}{2^5}} \mbox{ and } {\color{teal}{2^{11}}}  & ~ \\
\hline
\end{array}
$$
\end{table}


\newpage
\section{The main theorem}\label{SecMainTh}

Our main theorem specifies $A$ as a sequence of atoms.

\begin{theorem}\label{MainTh}
The sequence $A$ is the limit as $k$ goes to infinity of $S R(1) R(2) \ldots R(k)$, where $S = T U V W U$ and
\beql{EqRk}
R(k) ~=~ (V W)^{2^{k-1}-1} X\, U \,(V W)^{2^{k-1}-1} \,X\, U
\eeq 
for $k \ge 1$.
Furthermore, in $R(2)$ and beyond, 
any pair of consecutive atoms $X\,U$ has the following ``ancestor property'': 
the  last six terms in such a group, say:
$$
\left. 
\begin{array}{ll}
a(p) &= 2^t + 2^u, \\
a(p+1) &= 2^y,
\end{array}
 \right\} \mbox{~from~atom } X
 $$
 and
 $$
 \left.
\begin{array}{ll}
a(p+2) &= 2^v + 2^w, \\
a(p+3) &= 2^{y+1}, \\
a(p+4) &= 2^{y+2}, \\
a(p+5) &= 2^{y+3}, \\
\end{array}
 \right\} \mbox{~from~atom } U
$$
where $t < u, v < w$ and $u < w$,
have a common ``ancestor'' $a(m) = 2^u + 2^v$,  and $2^u$ and $2^w$ can be added to form a new term.
\end{theorem}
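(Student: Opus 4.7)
The plan is to prove Theorem~\ref{MainTh} by strong induction on the atom position, maintaining a detailed invariant about the state of the construction after each atom boundary. The invariant tracks three things: (i) the exact two free $1$-bits present at the end of the atom (which will form the first weight-$2$ term of the next atom), (ii) a finite ordered list of terms that have already been produced but not yet been freed, which by the rule in Section~\ref{SecLog} are precisely the ones that will drive the upcoming atoms through the freeing operation, and (iii) the ancestor property for every completed $XU$ pair from $R(2)$ onward. Armed with such an invariant, the type of the next atom and each of its terms can be read off deterministically from the local state, because the remarks after Table~\ref{Tab6terms} reduce the construction to the question of how the greedy choice of Definition~4 combines the currently free $1$'s with the next unused power of $2$.

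For the base case, one verifies directly from Table~\ref{TableLog} and Tables~\ref{TableT}--\ref{TableX} that the initial block $S=TUVWU$ is produced exactly as claimed, recording the free $1$'s, the list of still-unfreed terms, and the (trivial, at this stage) ancestor data at the end of $S$. This is a finite computation matching the first atoms of $A$ against their codes in Table~\ref{TableCodes}.

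For the inductive step, assuming the decomposition is correct through $S\,R(1)\,\ldots\,R(k)$ together with the invariant, we show that the next block of atoms is exactly $R(k+1)=(VW)^{2^{k}-1}\,X\,U\,(VW)^{2^{k}-1}\,X\,U$. The work is to analyse each of the four atom types $V$, $W$, $X$, $U$ separately: starting from the current invariant state, identify the two free $1$'s that begin the atom, verify that the greedy rule forces the code $Aw,Fw,\ldots$ listed in~\eqref{EqAtoms}, and update the invariant for the next boundary. The $V$ and $W$ atoms are distinguished precisely by whether the weight-$2$ term in position $5$ of the atom combines a freshly freed bit with a bit from the immediately preceding input term, or with a bit inherited from a much earlier term via the ancestor property. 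The $X$ atom then appears exactly when the current pool of paired ancestors is exhausted in the doubling window, which is the combinatorial core of the theorem and explains why the exponent $2^{k-1}-1$ governs the number of $VW$ pairs between successive $X$ atoms.

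The main obstacle is showing rigorously that the $X$ atoms occur at exactly the claimed doubling positions, equivalently that exactly $2^{k-1}-1$ copies of $VW$ fit between consecutive $X$ atoms of $R(k)$. To handle this I would set up a bookkeeping bijection between the weight-$2$ terms produced in $R(k)$ (each of which becomes an ancestor for a later $W$ atom) and the $W$ atoms of $R(k+1)$ that consume them, and show that each half of $R(k)$ produces twice as many usable ancestor pairs as the corresponding half of $R(k-1)$; this gives the doubling. Once the counting is in place, the ancestor property for the new $XU$ pairs of $R(k+1)$ is inherited by construction: the ancestor $a(m)=2^u+2^v$ is identified as an earlier weight-$2$ term in the block, the inequalities $t<u$, $v<w$, $u<w$ follow from the order in which new powers of $2$ appear (Section~\ref{Sec3}), and the assertion that $2^u+2^w$ is a legitimate new term requires checking that no earlier term of $A$ already equals $2^u+2^w$, which is the step most dependent on the full force of the inductive hypothesis because it uses the explicit atom-by-atom description of every prior term. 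The desired statement that $A=\lim_{k\to\infty} S\,R(1)\cdots R(k)$ then follows immediately, since the partial products cover longer and longer prefixes of $A$ and introduce no new weights beyond $2$, which also yields the stated corollary that every nonzero term of $A$ is a sum of at most two powers of $2$.
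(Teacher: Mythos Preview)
Your inductive framework is aimed in the right direction, but you are missing the structural simplification that makes the argument short.  The paper does not walk atom by atom through $R(k+1)$ while maintaining a running invariant about free bits and ancestor chains.  Instead it observes that the construction of $a(n+1),\ldots,a(2n)$ is driven entirely by which earlier terms $a(\lceil (n+1)/2\rceil),\ldots,a(n)$ get freed and when; hence one may regard the construction as a \emph{local algorithm} that sends any block of input atoms to a block of output atoms of twice the length, and this map is \emph{compositional}: if $I_1\to O_1$ and $I_2\to O_2$ then $I_1I_2\to O_1O_2$.  With that in hand the whole theorem reduces to four finite verifications: $V\to VW$, $W\to VW$, $XU\to VWXU$ (with the ancestor property propagated), and one base case $SR(1)\to SR(1)R(2)$.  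The doubling then falls out of pure word algebra,
\[
(VW)^{2^{k-1}-1}\,XU \;\longrightarrow\; (VW)^{2(2^{k-1}-1)}\cdot VW\,XU \;=\; (VW)^{2^k-1}\,XU,
\]
so your ``bookkeeping bijection'' and the claim that each half of $R(k)$ yields twice as many usable ancestor pairs are not separate lemmas to be proved but immediate consequences of these three local rules.

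Where your proposal has a genuine gap is that the invariant is never written down, and the one place you try to be specific is off: the $V$/$W$ distinction is not about whether the position-$5$ weight-$2$ term draws on ``a much earlier term via the ancestor property'', but simply about whether the first or the second freed input term has weight $2$ (compare the codes $A2A1\,F2\,A1A1\,F1\,A2A1F1$ versus $A2A1\,F1\,A1A1\,F2\,A2A1F1$ in Table~\ref{TableCodes}).  The ancestor property enters only in the $XU\to VWXU$ step, where it is needed to certify that a particular sum of two free bits has not already occurred; the paper isolates exactly this and checks it in a single table (Table~\ref{TableXUgives}), rather than carrying it through an unbounded invariant.  I would recommend reformulating your induction around the input--output action on atoms: once you see that a single $V$ or $W$ input produces $VW$ and a single $XU$ input produces $VWXU$, everything else is concatenation.
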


\subsection{The local algorithm.}\label{SecLocal}
For the proof we will make use of a ``local'' version of the algorithm that generates $A$.
That algorithm,  described following  Definition 1 at the beginning of this article
and again at the start of Section~\ref{SecConstruction},
takes as input an initial segment $a(1), \ldots, a(n)$ of $A$, and produces as output 
 the  segment $a(1), \ldots, a(2n)$ of twice the length.

The local algorithm operates on finite segments of $A$, 
 but these segments do not need to start at the beginning of $A$. 
 However, the input and output must be sequences of atoms.
 By definition, an atom ends with two free numbers, whose sum is the first term of the atom.
 Suppose the input is a sequence of atoms $I = I(1), I(2), \ldots, I(k)$, and the output is
 a sequence of atoms $O = O(1), O(2), \ldots,  O(\ell)$. The algorithm knows $I(1)$, 
 but (assuming we are not at the very beginning of $A$), all it knows about $O(1)$ is that the first two terms are $2^t + 2^u$ and $2^z$
 for some integers $t, u, z$. 
 

\begin{table}[!ht]
\caption{Illustrating the local algorithm:   inputs $V$ or $W$ produce output $V W$. }\label{MyFig4b}
$$
\begin{array}{|c|c|c|c|c|c|c|c|}\hline
\multicolumn{3}{|c}{\mbox{Input}} &  \multicolumn{5}{|c|}{\mbox{Output}} \\ \hline
\mbox{atom} & \mbox{index} & I(n) & \mbox{index} & O(n) & \mbox{free} & \mbox{code} & \mbox{atom} \\
\hline
V \mbox{ or } W & n    & [b,c]  & 2n-1 & [t,u]  & -          & A2       & V \\ \cline{4-7}
   & ~    & b < c & 2n    & [z]     & [b,c]    & A1, F2 & ~ \\  \cline{2-7} 
   & n+1 & [d]   & 2n+1 & [z+1]  & [b,c]   & A1 &    \\ \cline{4-7}
   & ~     &        & 2n+2  & [z+2]  & [b,c,d] & A1, F1 & ~ \\  \cline{2-7} 
   & n+2 & [d+1]  & 2n+3 & [b,d] & [c]        & A2 &    \\  \cline{4-7}
   & ~ &              & 2n+4  & [z+3]  & [c,d+1] & A1, F1 & ~ \\  \cline{2-8} 
   & n+3 & [d+2]  & 2n+5 & [c,d+1] & -       & A2 &  W  \\ \cline{4-7}
   & ~ &              & 2n+6  & [z+4]  & [d+2] & A1, F1 & ~ \\  \cline{2-7}
   & n+4 & [e,f]  & 2n+7 & [z+5] & [d+2]      & A1 &    \\ \cline{4-7}
   & ~ &    e<f    & 2n+8  & [z+6]  & [d+2, e,f] & A1,F2 & ~ \\  \cline{2-7}
   & n+5 & [d+3]  & 2n+9 & [d+2,e] & [f]       & A2 &    \\ \cline{4-7}
   & ~ &             & 2n+10  & [z+7]  & [f,d+3] & A1,F1 & ~ \\  
\hline
\end{array}
$$
\end{table}

From then on the local algorithm proceeds in the same way as
 the main algorithm. At step $2n$ in the output, term $n$ in the input is freed for reuse,
 and the next output is either the next power of $2$ or the smallest sum of a subset of the available free terms that has not yet been used.  Except when there is a pair of input atoms  $X, U$, Remark 6 at the end of Section~\ref{SecConstruction}
 always guarantees that any weight $2$ term we want to use in the output has not yet
 appeared in the sequence. In the case of input atoms $X, U$, the ``ancestor property'' provides the guarantee that we need (see Tables~\ref{TableSR1} and \ref{TableXUgives}  in the Appendix).
 
We require that when the input atoms have all been read, the output must be a sequence of  atoms, that is, a sequence of complete atoms, fragments of atoms are not permitted. The output will contain twice as many terms of $A$ as the input. We write $I \rightarrow O$ to indicate the action of the algorithm.
 With our restrictions on inputs and outputs, it follows that if $I_1 \rightarrow O_1$
 and $I_2 \rightarrow O_2$, then $I_1 I_2 \rightarrow O_1 O_2$.
 
 Table~\ref{MyFig4b} illustrates the local algorithm by showing how an input consisting
 of a single atom $V$ or $W$ produces the output $V W$. We keep track of the code for each step, because that is 
 how we distinguish a $W$ atom from a $V$ atom.
 
 Similarly, the single input atom $T$ produces the output $T U$ (see Table~\ref{TabTtoTU}).

 
 \begin{table}[!ht]
\caption{Illustrating the local algorithm:   input $T$ produces output $T U$. }\label{TabTtoTU}
$$
\begin{array}{|c|c|c|c|c|c|c|}\hline
\multicolumn{3}{|c}{\mbox{Input}} &  \multicolumn{4}{|c|}{\mbox{Output}} \\ \hline
\mbox{atom} & \mbox{index} & I(n) & \mbox{index} & O(n) & \mbox{free} & \mbox{atom} \\
\hline
T & 1    & 2^0  & 1 & 2^0             & -       & T \\ \cline{4-6}
   & ~    &        & 2  & 2^1             & 2^0  & ~ \\  \cline{2-6}   
   & 2   & 2^1   & 3 & 2^2             & 2^0   &    \\ \cline{4-6}
   & ~   &         & 4  & 2^3        & 2^0, 2^1 & ~ \\  \cline{2-7} 
   & 3   & 2^2   & 5  & 2^0+2^1 = 3 & - & U \\  \cline{4-6}
   & ~   &         & 6  & 2^4               & 2^2   & ~ \\  \cline{2-6} 
   & 4   & 2^3  & 7  & 2^5               & 2^2   & ~  \\ \cline{4-6}
   & ~   &        & 8  & 2^6              & 2^2, 2^3 & ~ \\  
\hline
\end{array}
$$
\end{table}

On the other hand, the single atoms $U$ and $X$ are not legal inputs, since the corresponding output sequences are not sequences of atoms. (The beginning of Table~\ref{TableXUgives}  below shows what happens if the input is the single atom $X$. The output from $X$ has length $16$, and begins $V, W$, but the remaining four output terms do not form an atom.)

\begin{table} [!ht]
\caption{The four input-output combinations needed for the proof. }\label{Table13}
$$
\begin{array}{|c|c|c|}
\hline
\mbox{Input~atoms} & \mbox{Output~atoms} & \mbox{Remarks} \\
\mbox{(number of terms of~} A) & \mbox{(number of terms of~} A) & ~ \\
\hline
T U V W U X U X U & T U V W U X U X U & \mbox{Corresponds~to} \\
                                & V W X U V W X U   &  S R(1) \rightarrow S R(1) R(2) \\
(48)     &  (96) & \\
\hline
V & V W & \mbox{Used~to~prove~that} \\ 
(6) & (12) & R(k) \rightarrow R(k+1)  \\ \cline{1-2}
W & V W & \mbox{for~} k \ge 2  \\ 
(6) & (12) & \\ \cline{1-2}
X U & V W X U & \\
(12) & (24) & \\
\hline
\end{array}
$$
\end{table}

For the proof of the theorem we need just four input sequences, which are listed in Table.~\ref{Table13}.
We  have already verified that the second and third  pair are correct  in Table~\ref{MyFig4b}).

\subsection{The proof of the theorem.}\label{SecBase}

We now give the proof of the theorem.
There are four steps. 

First, Table~\ref{TableSR1} shows that the sequence begins with $S R(1)$ and
that $S R(1) \rightarrow S R(1) R(2)$.

Second, Table~\ref{MyFig4b} shows that $V$ or $W \rightarrow V W$.

Third, Table~\ref{TableXUgives} shows that $X U$ (from $R(k)$ with $k > 1$) $\rightarrow$ $V W X U$ with the ancestor property. 

Fourth, Table~\ref{Table18} shows that 
$$
TUV WUXUXU \rightarrow TUV WUXUXUV WXUV WXU\,,
$$
and combines all the steps to show that $SR(1) \ldots R(k) \rightarrow SR(1) \dots R(k + 1)$.

 This completes the proof.

(Tables~\ref{TableSR1} and \ref{TableXUgives} are given in the Appendix.)


\begin{table}
\caption{Combines all the steps to show that $S R(1) \ldots R(k) \rightarrow  S R(1) \ldots R(k+1)$. } \label{Table18}
\begin{center}
\begin{tabular}{|c|c|c|c|c|}
\hline
Input & Input atoms & Output atoms & Output atoms & Output\\
\cline{1-1}
\cline{2-2}
\cline{3-3}
\cline{4-4}
\cline{5-5}
\multirow{2}{*}{\makecell*{S\\R(1)}} & \multirow{2}{*}{\makecell*{$T U V W U$\\$X U X U$}} & \multirow{2}{*}{\makecell*{$T U V W U X U X U$\\$V W X U V W X U$}} & \makecell*{$T U V W U$\\$X U X U$} & \makecell*{S\\R(1)}\\
\cline{4-4}
\cline{5-5}
 &  &  & \makecell*{$V W X U$\\$V W X U$} & R(2)\\
\cline{1-1}
\cline{2-2}
\cline{3-3}
\cline{4-4}
\cline{5-5}
R(2) & \makecell*{$V W X U$\\$V W X U$} & \makecell*{$(V W)^2 V W X U$\\$(V W)^2 V W X U$} & \makecell*{$(V W)^{2^2 - 1} X U$\\$(V W)^{2^2 - 1} X U$} & R(3)\\
\cline{1-1}
\cline{2-2}
\cline{3-3}
\cline{4-4}
\cline{5-5}
$\cdots$ & $\cdots$ & $\cdots$ & $\cdots$ & $\cdots$\\
\cline{1-1}
\cline{2-2}
\cline{3-3}
\cline{4-4}
\cline{5-5}
R(k) & \makecell*{$(V W)^{2^{k-1}-1} X U$\\$(V W)^{2^{k-1}-1} X U$} & \makecell*{$(V W)^{2^k-2} V W X U$\\$(V W)^{2^k-2} V W X U$} & \makecell*{$(V W)^{2^k-1} X U$\\$(V W)^{2^k-1} X U$} & R(k+1)\\
\hline
\end{tabular}
\end{center}
\end{table}

\begin{coro}\label{Cor1}
The nonzero terms in $A$ have weight $1$ or $2$.
\end{coro}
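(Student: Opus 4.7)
The plan is to derive the corollary immediately from Theorem~\ref{MainTh} together with the weight information tabulated in Table~\ref{TableWts}. Since the corollary concerns only nonzero terms, the excluded term $a(0)=0$ causes no trouble, and we may restrict attention to the terms $a(n)$ with $n \ge 1$, which by the conventions of Section~\ref{Sec3} are exactly the terms covered by the atom decomposition.

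The first step is to observe that Theorem~\ref{MainTh} realizes the sequence $\{a(n):n\ge 1\}$ as the concatenation of the word $S R(1) R(2) R(3) \cdots$, where $S$ and each $R(k)$ are themselves words over the alphabet $\{T,U,V,W,X\}$. In particular, every index $n \ge 1$ falls inside some atom of type $T$, $U$, $V$, $W$, or $X$, and the position of $a(n)$ inside that atom is determined.

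The second step is to invoke Table~\ref{TableWts}, which lists the Hamming weights of the terms comprising each atom type. Inspecting the five rows of that table shows that every entry is either $1$ or $2$; that is, no atom ever contains a term of weight zero or of weight three or more. Combining this with the first step, every $a(n)$ with $n \ge 1$ has Hamming weight $1$ or $2$, proving the corollary.

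There is no real obstacle here: once Theorem~\ref{MainTh} is granted, the corollary is a purely combinatorial consequence of reading off the weight column of Table~\ref{TableWts}. The only thing one might want to double-check is that the atom decomposition genuinely covers every index $n \ge 1$ without gaps, which is built into the statement of Theorem~\ref{MainTh} (the limit word begins at $a(1)$ via the starting letter $T$ of $S$, and each successive $R(k)$ appends a block of complete atoms).
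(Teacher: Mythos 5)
Your proposal is correct and matches the paper's own argument: the corollary follows immediately from Theorem~\ref{MainTh} because $A$ (apart from $a(0)=0$) is a concatenation of atoms, and Table~\ref{TableWts} shows every atom consists solely of terms of weight $1$ or $2$.
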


This is because $A$ is made up of atoms, and the atoms have this property.
It also follows that we never have two weight-$2$ terms in succession, and that weight-$2$ terms only occur at odd indices. Furthermore, except at the start, there are never more than three weight-1 terms in succession.


\section{Numbering the atoms}\label{Sec5}

From the main theorem, we know the order in which the atoms appear in $A$.
We will number them as follows.  

We start by numbering the $U$ atoms
 $U(0), U(1), U(2), \ldots$.
 From $U(2)$ onwards, each $U(k)$ is preceded by an $X$ atom,
 so we number the $X$ atoms $X(2), X(3), X(4), \ldots$. 
 As a result, the block $R(k)$, $k \ge 1$, contains
 $\ldots, X(2k), U(2k) \ldots$, and ends with $\ldots, X(2k+1), U(2k+1)$.

It remains to number  the $V$ and $W$ atoms.
Starting with $R(2)$, the $V$ and $W$ atoms in $R(k)$ are
$V(2k-4,i), W(2k-4,i)$ for $i = 0, \ldots, 2^{k-1}-2$, followed by $X(2k), U(2k)$, then
$V(2k-3,i), W(2k-3,i)$ for $i = 0, \ldots, 2^{k-1}-2$, followed by $X(2k+1), U(2k+1)$.
The initial $V$ and $W$ in the $S$ block we label $V(-1)$ and $W(-1)$.

The resulting list is given in Table~\ref{TableNames}.


\begin{table}[!ht]
\caption{List of the atoms in $A$, in order.}\label{TableNames}
\begin{align}
S ~=~&T, U(0), V(-1), W(-1), U(1), \nonumber \\
R(1) ~=~&X(2), U(2), X(3), U(3), \nonumber \\
R(2) ~=~&V(0,0), W(0,0), X(4), U(4), V(1,0), W(1,0), X(5), U(5),  \nonumber \\
R(3) ~=~&V(2,0), W(2,0), V(2,1), W(2,1), V(2,2), W(2,2), X(6), U(6), \nonumber \\
&V(3,0), W(3,0), V(3,1), W(3,1), V(3,2), W(3,2), X(7), U(7), \nonumber \\
R(4) ~=~&V(4,0), W(4,0), V(4,1), W(4,1), \dots, V(4,6), W(4,6), X(8), U(8), \nonumber \\
&V(5,0), W(5,0), V(5,1), W(5,1), \dots, V(5,6), W(5,6), X(9), U(9), \nonumber \\
R(5) ~=~&V(6,0), W(6,0), V(6,1), W(6,1), \dots, V(6,14), W(6,14), X(10), U(10), \nonumber \\
&V(7,0), W(7,0), V(7,1), W(7,1), \dots, V(7,14), W(7,14), X(11), U(11), \nonumber \\
R(6) ~=~&V(8,0), W(8,0), V(8,1), W(8,1), \dots, V(8,30), W(8,30), X(12), U(12), \nonumber \\
&V(9,0), W(9,0), V(9,1), W(9,1), \dots, V(9,30), W(9,30), X(13), U(13), \nonumber \\
\ldots & \ldots \nonumber
\end{align}
\end{table}

Since we know the lengths of the atoms of each type, it is straightforward to determine where 
each block and each atom begins and ends.
As for the blocks, $S$ begins at $a(1)$ and ends at $a(24)$, and $R(k)$, $k \ge 1$, begins at 
$n = 12 \mu(2k)+1$ and ends at $n = 12 \mu(2k+2)$.

For a given value of $n$, we can find $a(n)$ by first finding which block $n$ is in, and then which atom in that block, and then using the data in the next section.


\section{Explicit formulas for atoms}\label{SecSpectra}
The following theorems specify the individual terms in each type of atom (we call this 
the ``spectrum'' of the atom).

For a given type of atom, the index in $A$ of the first term  is determined by the data in Table~\ref{TableNames},
the positions of the weight 1 terms in the atom are determined by the type of the atom, and the actual
powers of 2 are easy because they start with $2^0$ at the beginning of $A$, and there are no gaps.
It only remains to identify the weight 2 terms, but this can be done recursively using the information in Tables~\ref{TableT}-\ref{TableX}.

In the following sections we also give the term after the atom ends, since that is the first term of the next atom, to help link the atoms together.
 
\textbf{Remark 6.1:}
The atoms of type $V_{k,j}$ and $W_{k,j}$ are very similar, and 
for the purpose of proving the main theorem could have been merged.
However, their spectra are very different. By comparing Theorems~\ref{ThV} and~\ref{ThW}
we see that the spectra of $V_{k,0}$ differs from that of $V_{k,j}$ for $j>0$, whereas for $W_{k,j}$ there is no
such distinction. The chief difference, however, is that the spectrum of $V_{k,j}$ can be expressed simply in terms of $k$ and $j$, while for $W_{k,j}$ one term depends on an earlier atom, which may be of type $V$, $W$, or $X$.

\subsection{Atoms of type U.}\label{SecUSpectra}

 $U(0)$ consists of $a(5)-a(8)$, $U(1)$ is $a(21)-a(24)$, $U(2)$ is $a(33)-a(36)$, $U(3)$ is $a(45)-a(48)$, and so on. $U(0)$ is special, but for $k>0$ there is a uniform formula.
 
\begin{theorem}\label{ThU}
The atom  $U(k)$ for $k > 0$ consists of terms $a(n)$ to $a(n+3)$, where $n = 12 \mu(k+1) - 3$.
Then
$a(n) = 2^r + 2^s$, where $r = 2 \mu(k+1)-1$, and $s = 11$ if $k$ is even or 15 if $k$ is odd;
$a(n+1) = 2^t$, where $t = 8 \mu(k+1) - 1$; $a(n+2) = a(n+1)+1$; and $a(n+3) = a(n+1)+2$.
The next term after $U(k)$ ends is $2^u+2^v$ where $u=2 \mu(k+3)-1$, $v = 2 \mu(k+3)+1$.
\end{theorem}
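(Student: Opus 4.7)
The plan is to establish the theorem in three steps: (i) locate the starting index $n$ of $U(k)$; (ii) identify the three weight-$1$ terms of $U(k)$ as consecutive fresh powers of $2$; and (iii) identify the leading weight-$2$ term of $U(k)$, along with the first term of the following atom, both of which come from free $1$'s bequeathed at atom boundaries. I rely throughout on the main theorem (Theorem \ref{MainTh}) and the enumeration of atoms in Table \ref{TableNames}.

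For (i), each block $R(j)$ is $(VW)^{2^{j-1}-1}XU$ repeated twice, so $|R(j)| = 12 \cdot 2^j = 12(\mu(2j+2) - \mu(2j))$ and is symmetric about its midpoint. By Table \ref{TableNames}, $U(k)$ for $k \ge 2$ is the last atom of either the first or the second half of $R(\lceil k/2 \rceil)$ according to the parity of $k$, and in both cases a telescoping summation of block lengths shows that $U(k)$ terminates at index $12\mu(k+1)$, and hence begins at $n = 12\mu(k+1) - 3$. For (ii), Section \ref{Sec3} guarantees that the weight-$1$ terms $a(n+1), a(n+2), a(n+3)$ are three consecutive fresh powers of $2$. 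The exponent $t$ of $a(n+1)$ is the total count of all weight-$1$ terms of $A$ up to but not including $a(n+1)$, minus one; using the per-atom weight-$1$ counts from Table \ref{TableWts} ($T{:}4$, each $U{:}3$, each $V{:}4$, each $W{:}4$, each $X{:}5$) and the composition of $S$ and $R(1),\ldots,R(\lceil k/2 \rceil)$, a direct summation yields $t = 8\mu(k+1) - 1$. The same count identifies $r = 2\mu(k+1) - 1$ as the exponent of the final $A1$ step \emph{inside} $X(k)$, whose bit survives as a free $1$ into $U(k)$.

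For (iii), by Table \ref{TableNames} the atom immediately before $U(k)$ is $X(k)$, and the first term of $U(k)$ is the sum of the two free $1$'s surviving to the end of $X(k)$. The higher bit, $2^s$, is the one bit of an earlier weight-$2$ input to $X(k)$ that is \emph{not} reused inside $X(k)$ at the $A2$ step placed between the second $F2$ and the final $F1$ in the $X$-code $A2\,A1\,F2\,A1\,A1\,F1\,A2\,A1\,F2\,A2\,A1\,F1$. The ancestor property in Theorem \ref{MainTh} identifies exactly which of the two bits of that input is consumed, and a joint induction — carried out alongside the companion spectrum theorems for $V$, $W$, and $X$ — shows that for sufficiently large $k$ this surviving bit stabilizes at $s = 11$ when $k$ is even and $s = 15$ when $k$ is odd. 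The parity split reflects that $X(2j)$ and $X(2j+1)$ sit in the two halves of $R(j)$ and hence trace their surviving low bits back through different chains of atoms. The closing clause about the opening term of the next atom is analogous: the two free $1$'s that $U(k)$ bequeaths come from its two weight-$1$ inputs at indices $n/2$ and $n/2 + 1$, and the counting of step (ii) gives their exponents as $u = 2\mu(k+3) - 1$ and $v = 2\mu(k+3) + 1$.

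The main obstacle is step (iii): controlling which low bit survives inside $X(k)$ demands the ancestor property in an essential way, and the induction cannot be done on $U$ alone but must be coupled with the corresponding spectra for the other atom types. Steps (i) and (ii), by contrast, are routine bookkeeping once the atom layout and weight profiles of Tables \ref{TableNames} and \ref{TableWts} are in hand.
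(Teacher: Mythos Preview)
Your overall strategy---locating the atom via Table~\ref{TableNames}, counting weight-$1$ terms to fix the exponents of the powers of $2$, and tracing the free bits left at the end of $X(k)$ to determine the opening weight-$2$ term---is exactly what the paper sketches in the preamble to Section~\ref{SecSpectra}. The paper gives no proof beyond that sketch, so your outline is actually more detailed than what appears there.

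However, there is a concrete error in step~(ii). You write that ``the same count identifies $r = 2\mu(k+1)-1$ as the exponent of the final $A1$ step inside $X(k)$, whose bit survives as a free $1$ into $U(k)$''. This confuses $A$ and $F$ steps: an $A1$ step \emph{creates} a fresh power of $2$ as a new term of $A$; that term is not freed until much later and contributes nothing to the free set entering $U(k)$. The two free bits at the end of $X(k)$ come from $F$ steps---one from the final $F1$ (freeing the fourth input, a weight-$1$ term) and one as the leftover of the second $F2$ after the subsequent $A2$ consumes two of the three available bits. The exponent of the final $A1$ inside $X(k)$ is of order $8\mu(k+1)$, nowhere near $2\mu(k+1)-1$.

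Your step~(iii) also has the r\^oles of the two bits reversed. The bit you describe as ``the higher bit, $2^s$\,''---the leftover from the second weight-$2$ input not consumed by the last $A2$ of $X(k)$---is actually the \emph{lower} of the two free bits for all $k$ past the first few exceptional cases; the higher bit comes from the final $F1$ and its exponent grows with $k$. It is this lower, leftover bit that stabilises at $11$ or $15$ under the joint recursion. Your central insight---that the stabilisation requires the ancestor property and a coupled induction across the spectrum theorems for $U$, $V$, $W$, $X$---is correct and matches the paper's intent, but the bookkeeping needs to be realigned once you distinguish $A$ from $F$ and sort out which bit is which.
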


For example, for $U(3)$ we get $a(45)=2^{15}+2^7 = 32896, a(46) = 2^{31}, a(47) = 2^{32},
a(48) = 2^{33}$, and $a(49) = 2^{16}+2^{17} = 196608$.

 \subsection{Atoms of type X.}\label{SecXSpectra}

 $X(2)$ consists of $a(25)-a(32)$, $X(3)$ is $a(37)-a(44)$, and so on.
 $X(k), k \ge 2$  consists of terms $a(n)-a(n+7)$, where $n = 12 \mu(k+1)  -11$.
 
 The initial $X(k)$ atoms are exceptional, but starting at $X(10)$ there is a uniform formula.
 
 From now on, to save space, we will specify numbers by writing them as powers of 2 and listing the exponents in square brackets. 
 
\begin{theorem}\label{ThX}
The atom  $X(k)$ for $k \ge 10$ consists of terms $a(n)$ to $a(n+7)$, where $n = 12 \mu(k+1) - 11$.
Let $x = \mu(k-1)-1$. 
Then $a(n)= [4x+2, 8x+5], a(n+1) = [16x+10]$, 
$a(n+2)= [16x+11], a(n+3)=[16x+12]$, 
$a(n+4) = [x, 8x+6], a(n+5)  = [16x+14], a(n+6) = [2x+1, 4x+3]$, and $a(n+7) = [16x+15]$.
The next term after $X(k)$ ends is $[11~(k \mbox{~even}) \mbox{~or~} 15~(k \mbox{~odd}), 8x+7]$.
\end{theorem}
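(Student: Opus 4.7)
The plan is to prove Theorem \ref{ThX} by induction on $k$, building on the main theorem and the companion spectrum theorems \ref{ThU}--\ref{ThW}, and using the local template for an $X$ atom given in Table \ref{TableX}. The first task is to pin down the starting index and the immediate context of $X(k)$. The numbering scheme of Section \ref{Sec5} places $X(k)$ inside the block $R(\lfloor k/2 \rfloor)$, at the end of a run $(VW)^{2^{\lfloor k/2\rfloor-1}-1}$; summing the atom-lengths from Table \ref{TableWts} and invoking $\mu(2m)=2^m$, $\mu(2m+1)=3\cdot 2^{m-1}$ yields $n=12\mu(k+1)-11$. Since $k\ge 10$, the atom immediately preceding $X(k)$ is a $W$ atom in the regime where Theorem \ref{ThW} applies uniformly, so its two ``end-free'' bits can be read off explicitly. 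Those bits turn out to be $2^{4x+2}$ and $2^{8x+5}$ with $x=\mu(k-1)-1$, which forces $a(n)=[4x+2,\,8x+5]$.

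With $a(n)$ in place, the remaining seven terms follow the weight pattern $1,1,1,2,1,2,1$ dictated by Table \ref{TableX}. The five weight-$1$ entries $a(n+1),a(n+2),a(n+3),a(n+5),a(n+7)$ are consecutive new powers of $2$: a tally of weight-$1$ terms in all atoms preceding $X(k)$, using Tables \ref{TableNames} and \ref{TableWts}, shows that the highest power-of-$2$ exponent used so far equals $16x+9$, giving the five values $[16x+10],[16x+11],[16x+12],[16x+14],[16x+15]$ in the prescribed slots. For the two interior weight-$2$ entries $a(n+4)$ and $a(n+6)$, the freeing rule of Section \ref{SecLog} identifies exactly which earlier terms have just been released -- the weight-$2$ leading terms of certain nearby $V$ and $W$ atoms -- whose bits, read off Theorems \ref{ThV} and \ref{ThW}, combine to produce $[x,\,8x+6]$ and $[2x+1,\,4x+3]$ respectively. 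The first term of $U(k)$, built from the two end-free bits of $X(k)$, then matches the final clause of the theorem and agrees with Theorem \ref{ThU}.

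The hard part will be verifying minimality, namely that each weight-$2$ candidate is the smallest sum of currently-free bits that has not already appeared in $A$. By Remark 6 of Section \ref{SecConstruction}, any pair involving a freshly freed weight-$1$ bit is automatically new, which disposes of most cases; the delicate check is that no smaller in-stock pair of free bits would produce an unused value first. Here I would lean on the ancestor property of Theorem \ref{MainTh} specialized to the $X,U$ pair, which is precisely the bookkeeping device that tracks which pair-sums remain available. The recursive dependence of the formula on $\mu(k-1)$ is the outward signature of this coupling: the bits still free at the start of $X(k)$ propagate forward from the end-free bits of the preceding $W$ atom, and unwinding that through Theorem \ref{ThW} is what produces the factor $\mu(k-1)$.
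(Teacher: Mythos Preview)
Your approach is essentially the same as the paper's: the paper does not give a separate proof of Theorem~\ref{ThX} but explains at the start of Section~\ref{SecSpectra} that all the spectrum theorems follow from the main theorem by (i) reading off the starting index from Table~\ref{TableNames}, (ii) counting the consecutive powers of~$2$ for the weight-$1$ slots, and (iii) tracing the weight-$2$ entries recursively through the atom templates of Tables~\ref{TableT}--\ref{TableX}. Your outline fleshes out exactly this argument (with the ancestor property supplying the minimality check), so there is nothing substantively different to compare.
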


For example, for $X(10)$ we get $n=12 \mu(10)-11 = 565$, $x = \mu(9)-1 = 23$, 
 $a(565)=[94,189],  a(566)=[378],  a(567)=[379],
a(568) = [380], a(569) = [23,190],  a(570)= [381],  a(571) = [47,95],
a(572) = [382]$, and $a(573) =  [11,191]$. Note the saving in space: when written in full, $a(565) = [94, 189] = 2^{94}+2^{189}$ is
$$
           784637716923335095479473677920765342641360514956390301696\,.
$$

 \subsection{Atoms of type V.}\label{SecVSpectra}
$V(0,0)$ consists of $a(49)-a(54)$,
$V(1,0)$ is $a(73)-a(78)$,
$V(2,0)$ is $a(97)-a(102)$,
$V(2,1)$ is $a(109)-a(114)$,
$V(2,2)$ is $a(121)-a(126)$,
$V(3,0)$ is $a(145)-a(150)$,
$V(3,1)$ is $a(157)-a(162)$,
$V(3,2)$ is $a(169)-a(174)$,
and so on.

 \begin{theorem}\label{ThV}
The atom  $V(k,j)$ for $k \ge 0, 0 \le j \le 2^{\lfloor k/2 \rfloor +1} -2$ consists of terms $a(n)$ to $a(n+5)$, where $n = 12 \mu(k+4) + 12j + 1$.

If $j=0$ let $x=2 \mu(k+4)$. Then 
$a(n) = [2x, 2x+1], a(n+1) = [4x+2], a(n+2) = [4x+3], a(n+3) = [4x+4],
a(n+4) = [x, 2x+2], a(n+5) = [4x+5]$.
The term after $V(k,0)$ ends is $a(n+6) = [x+1, 2x+3]$.

If $j>0$ let $x= \mu(k+4)+j$. Then 
$a(n) = [2x, 4x+1], a(n+1) = [8x+2], a(n+2) = [8x+3], a(n+3) = [8x+4],
a(n+4) = [x, 4x+2], a(n+5) = [8x+5]$.
The term after $V(k,j)$ ends is $a(n+6) = [2x+1, 4x+3]$.
\end{theorem}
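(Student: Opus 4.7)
The plan is to fix $(k,j)$, locate $V(k,j)$ inside the block decomposition of Table~\ref{TableNames}, and then read off its six terms by running the local algorithm of Table~\ref{MyFig4b} with the appropriate parent atom as input. The starting-index formula $n = 12\mu(k+4)+12j+1$ is bookkeeping: each block $R(\ell)$ has length $12\mu(2\ell)$ (six times the number of length-$6$ $V$ and $W$ atoms, plus the contributions of two length-$8$ $X$ atoms and two length-$4$ $U$ atoms), and summing these together with the $12j$ terms contributed by the atoms of $R(\lfloor k/2\rfloor+2)$ preceding $V(k,j)$ telescopes, using $\mu(m+2)=2\mu(m)$, to the claimed $n$.

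The four weight-$1$ terms $a(n+1), a(n+2), a(n+3), a(n+5)$ are then immediate from the observation in Section~\ref{Sec3} that weight-$1$ terms realize consecutive powers of $2$ with no gaps: counting weight-$1$ slots among the preceding atoms using Table~\ref{TableWts} pins down the exponents $4x+2,4x+3,4x+4,4x+5$ for $j=0$ and $8x+2,8x+3,8x+4,8x+5$ for $j>0$. For the weight-$2$ term $a(n)$, the two free $1$'s available at the start of the atom are supplied by the immediately preceding atom: when $j=0$ that atom is $U(k+3)$, and Theorem~\ref{ThU}'s ``next term after $U$ ends'' clause directly gives the shape $[2x,2x+1]$; when $j>0$ it is $W(k,j-1)$, and the analogous ``next term after $W$ ends'' clause (supplied by the companion theorem for $W$ atoms, proved by a parallel argument) gives $[2x,4x+1]$. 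The middle weight-$2$ term $a(n+4)$ is formed inside the atom by combining a power of $2$ carried over from the first term of the parent input atom (column three of Table~\ref{MyFig4b}) with a weight-$1$ term freed during the atom itself; substituting the spectrum of that parent then yields $[x,2x+2]$ and $[x,4x+2]$ respectively.

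The main obstacle is the case split $j=0$ versus $j>0$: the two formulas differ precisely because the predecessor atom is of a different type ($U$ versus $W$), shifting the parent under the local algorithm accordingly. Once predecessor and parent are correctly identified, each of the six equalities reduces to direct substitution using $\mu(m+2)=2\mu(m)$. The overall proof is an induction on the block index $\lfloor k/2\rfloor+2$ (run in mutual recursion with the spectrum theorem for $W$), with base cases $V(-1)$, $V(0,0)$, $V(1,0)$ verified against the initial terms listed in \seqnum{A354169}; at each inductive step Corollary~\ref{Cor1} together with the minimality clause in Definition~4 rule out any smaller candidate value, so the output of the local algorithm is forced to match the stated spectrum.
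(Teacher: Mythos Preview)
Your proposal is correct and follows essentially the same approach as the paper. The paper does not give a separate proof of this theorem; its argument is the short paragraph opening Section~\ref{SecSpectra} (locate the atom via Table~\ref{TableNames}, read off weight-$1$ terms as consecutive powers of $2$, and recover the weight-$2$ terms recursively from Tables~\ref{TableT}--\ref{TableX}), and you have carried out exactly this programme for the $V$ atoms, with the $j=0$/$j>0$ case split reflecting whether the predecessor is a $U$ or a $W$ atom. One minor remark: the ``next term after $W$'' clause you invoke from Theorem~\ref{ThW} is already explicit in $k$ and $j$ (it does not involve the recursive parameter $y$), so the mutual recursion you mention is not strictly needed for the $V$ spectrum itself.
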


For example, for $V(2,0)$ we get $n=97$, $x=16$, 
$a(97) = [32,33], a(98) = [66], a(99)= [67], a(100) = [68], a(101) = [16,34], a(102)=[69], a(103)=[17,35]$.

For $V(2,1)$ we get $n=109$, $x=9$, 
$a(109) = [18,37], a(110) = [74], a(111)= [75], a(112) = [76], a(113) = [9,38], a(114)=[77], a(115)=[19,39]$.

 \subsection{Atoms of type W.}\label{SecWSpectra}
 
$W(0,0)$ consists of $a(55)-a(60)$,
$W(1,0)$ is $a(79)-a(84)$,
$W(2,0)$ is $a(103)-a(108)$,
$W(2,1)$ is $a(115)-a(120)$,
$W(2,2)$ is $a(127)-a(132)$,
$W(3,0)$ is $a(151)-a(156)$,
$W(3,1)$ is $a(163)-a(168)$,
$W(3,2)$ is $a(175)-a(180)$,
and so on.

$W(k,j)$ begins immediately after $V(k,j)$ ends. 
$W(k,j)$ is the only one of the five atoms whose  terms involve the terms of an earlier atom in a recursive way.
However, in contrast to $V(k,j)$, here there is no need to consider the case $j=0$ separately.

 \begin{theorem}\label{ThW}
The atom  $W(k,j)$ for $k \ge 0, 0 \le j \le 2^{\lfloor k/2 \rfloor +1} -2$ consists of terms $a(n)$ to $a(n+5)$, where $n = 12 \mu(k+4) + 12j + 7$. Let $m = (n+3)/2$. Then $a(m)$ is a weight 2 term in an earlier atom of type $V$, $W$, or $X$ (we give specific information about which earlier atom is involved immediately after the theorem). Let $a(m) = 2^x + 2^y$ where $x > y$. 
Then $x = 2\mu(k+4)+2j+2$,
$a(n) = [x-1,2x-1], a(n+1) = [4x-2], a(n+2) = [4x-1], a(n+3) = [4x],
a(n+4) = [y,2x], a(n+5) = [4x+1]$.
The term after $W(k,j)$ ends  is $a(n+6) = [x, 2x+1]$.
\end{theorem}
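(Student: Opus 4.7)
The plan is to prove Theorem~\ref{ThW} by induction on $k$ (with a nested induction on $j$), drawing on the block decomposition of Theorem~\ref{MainTh}, the spectrum of the immediately preceding atom $V(k,j)$ from Theorem~\ref{ThV}, and the local-algorithm trace in Table~\ref{MyFig4b}. The starting index $N = 12\mu(k+4)+12j+7$ is immediate from the numbering scheme of Section~\ref{Sec5}, since $V(k,j)$ occupies the six positions $N-6, \ldots, N-1$.

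First I would dispose of the four weight-$1$ positions $N+1, N+2, N+3, N+5$. Since every power of $2$ appears in $A$ in strictly increasing order and without omission, these four terms must be the four smallest as-yet-unused powers of $2$; matching with the final weight-$1$ exponent of $V(k,j)$ from Theorem~\ref{ThV} forces the values $[4x-2], [4x-1], [4x], [4x+1]$, where $x = 2\mu(k+4)+2j+2$. The first weight-$2$ term $a(N)$ coincides with the ``term after $V(k,j)$ ends'' already computed in Theorem~\ref{ThV}; a short algebraic identity collapses both the $j=0$ and $j>0$ subcases of that theorem into the single expression $[x-1, 2x-1]$.

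The heart of the argument is the second weight-$2$ term $a(N+4)$. I invoke the local algorithm in the exact configuration of Table~\ref{MyFig4b}: in the correspondence that sends the output pair $VW$ to $V(k,j)\,W(k,j)$, the input index $n+4$ of that table corresponds to the $A$-index $M = (N+3)/2 = 6\mu(k+4)+6j+5$. Table~\ref{MyFig4b} then gives $a(N+4) = [d+2, e]$, where $d+2$ equals the exponent of the single-$1$ input at local position $n+3$ and $e$ is the smaller exponent of $a(M) = 2^x + 2^y$, i.e., $e = y$. A direct comparison of the three free $1$'s $\{y, x, d+2\}$ active at this step confirms that $[y, d+2]$ is the smallest valid new value (the combination $[y, x]$ is already $a(M)$). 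Once one checks $d+2 = 2x$, the claimed form $a(N+4) = [y, 2x]$ follows. The final term $a(N+6) = [x, 2x+1]$ then falls out from the subsequent iteration of the same table, combining the residual freed exponents $x$ and $2x+1 = d+3$.

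The hard part will be verifying that the larger exponent of $a(M)$ equals $2\mu(k+4)+2j+2$ regardless of which type of earlier atom hosts $a(M)$. I plan to locate $M$ in the block structure of Section~\ref{Sec5} by a case analysis on $k$ and $j$: $M$ falls either into the central weight-$2$ slot of an earlier $V$ or $W$ atom, or into one of the weight-$2$ slots of an $X$ atom of a lower-indexed block. For each case, one reads off the larger exponent using the relevant spectrum theorem (Theorem~\ref{ThV}, the inductive hypothesis for Theorem~\ref{ThW}, or Theorem~\ref{ThX}) and verifies via the doubling identity $\mu(m+2)=2\mu(m)$ that the value collapses to $2\mu(k+4)+2j+2$. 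This case analysis is tedious but mechanical, and because it proceeds from smaller to larger indices it closes the induction without circularity.
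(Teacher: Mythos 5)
Your proposal is correct and follows essentially the same route the paper takes: the starting index comes from the atom numbering of Section~\ref{Sec5}, the weight-$1$ slots are filled by consecutive unused powers of $2$, the first weight-$2$ term is inherited from the end of $V(k,j)$ via Theorem~\ref{ThV}, and the term $a(n+4)$ is obtained recursively from the fifth (weight-$2$) term of the earlier atom containing $a(m)$, exactly as in the paper's case analysis ($V(k-2,\lfloor j/2\rfloor)$, $W(k-2,\lfloor j/2\rfloor)$, or $X(k+2)$ according to the parity and size of $j$). Your use of the local-algorithm trace in Table~\ref{MyFig4b} to pin down $a(n+4)=[y,2x]$ and $a(n+6)=[x,2x+1]$ is the same mechanism the paper relies on, just made explicit.
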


The term $a(m)$ mentioned in the theorem belongs to an atom of type $V$, $W$, or $X$.
In each case it is the fifth term in the atom, that is, the term $a(n+4)$  in one
of Theorems~\ref{ThV}, \ref{ThW}, \ref{ThX}, which is always a weight 2 term.
If $j$ has its maximum value, $2^{\lfloor k/2 \rfloor +1} -2$, then $a(m)$ is the fifth term in $X(k+2)$.\footnote{When checking small examples, remember that the formulas for $X(k)$ in Theorem~\ref{ThX} assume that $k \ge 10$.}
Otherwise, for $j \ge 0$, if $j$ is even then $a(m)$ is the fifth term in $V(k-2,\lfloor j/2 \rfloor)$,
and if $j$ is odd, $a(m)$ is the fifth term in $W(k-2,\lfloor j/2 \rfloor)$. Only in this last case do we need to
refer back to an earlier atom of type $W$, which in turn may refer back to a still earlier $W$ atom, $\ldots$

For example, for $W(0,0)$ we get $n=12\mu(4)+7 = 55$, $m=29$,
and   $a(29) = 1025 = 2^{10}+2^0$, so $x=10$, $y=0$.
Then $a(55)=[9,19], a(56)=[38], a(57)=[39], a(58)=[40],
a(59) = [0,20], a(60)=[41]$, and $a(61)=[10,21]$.

\subsection{The graph of the sequence.}\label{SecGraph}
Given the irregular appearance of the initial terms of $A$, the graph of a large number 
of terms is surprisingly regular. A log-linear plot consists
essentially  of two straight lines  with infinitely many isolated points.  (Figure~\ref{FigPlot}
shows a plot of the points $(n, \log_{10}a(n))$ for $1 \le n \le 1500$.)
This is easily explained.  From Table~\ref{TableNames} we see that if $k$ is large,
almost all the atoms are of type $V_{k,j}$ or $W_{k,j}$. 
It follows from Theorems~\ref{ThV} and \ref{ThW}  that in these atoms one-third of the terms
satisfy $\log_2 a(n) \approx n/3$ and two-thirds satisfy $\log_2 a(n) \approx 2n/3$.
This accounts for the majority of points in the graph.
From Theorems~\ref{ThU} and \ref{ThX} we see that most 
terms in the $U_k$ and $X_k$ atoms  also lie on the same
two lines, except that in each $U_k$ and $X_k$ there is one point on the line $\log_2 a(n) \approx n/6$.


\begin{figure}[!ht]
 \centerline{\includegraphics[angle=0, width=6in]{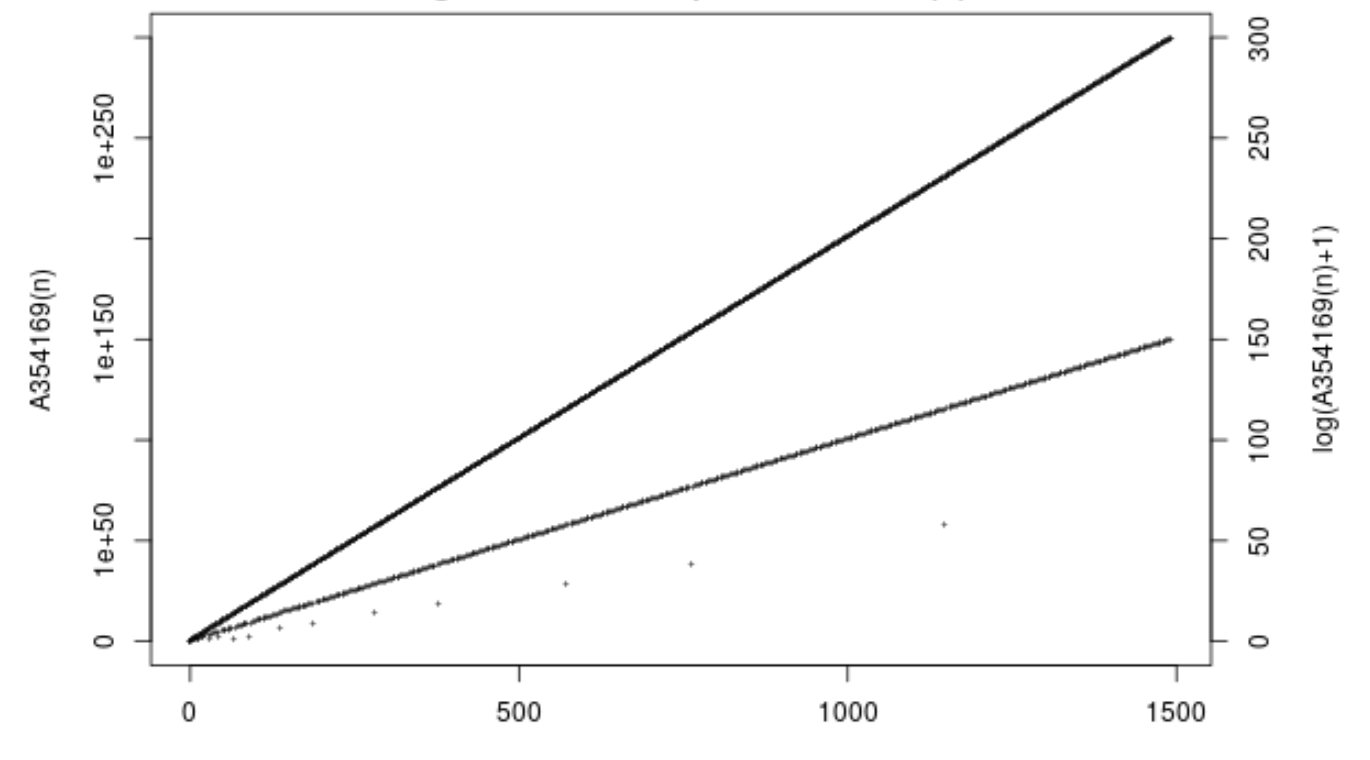}}
 \caption{Log-linear plot of $1500$ terms of $A$. The two lines have slopes
 $2 \log_{10}(2)/3$ and $\log_{10}(2)/3$.}\label{FigPlot}
 \end{figure}


\section{Related sequences}\label{SecSubsid}
Theorem~\ref{MainTh} also enables us to give formulas for various sequences related to $A$.
(These had been conjectured earlier, often in a different and more complicated form.)
 
For example, let $\{\tau(n), n \ge 1\}$ denote the indices of the weight-$2$ terms of $A$.
This sequence (\seqnum{A354798}) begins
$$
5, 9, 13, 15, 19, 21, 25, 29, 31, 33, 37, 41, 43, 45, 49, 53, 55, 59, 61, \ldots.
$$

\begin{theorem}
The generating function 
\beql{EqW2a}
P(x) ~=~ \sum_{n=1}^{\infty} x^{\tau(n)}
\eeq
is equal to 
\beql{EqW2b}
P_S ~+~ \sum_{k=1}^{\infty} ~
x^{12 \cdot 2^k + 1}~
\left(\frac{1-x^{12(2^{k-1}-1)}}{1-x^{12}}~P_{VW} ~+~ 
x^{12 \cdot 2^{k-1} - 1} P_{XU}\right)~\left(1~+~ x^{12 \cdot 2^{k-1}}\right)\,,
\eeq
where $P_S = x^5+x^9+x^{13}+x^{15}+x^{19}+x^{21}$,
$P_{VW} = 1+x^4+x^6+x^{10}$, and
$P_{XU} =1+x^4+x^6+x^8$.

\end{theorem}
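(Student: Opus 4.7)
The plan is to combine Theorem~\ref{MainTh}, which expresses $A$ as a concatenation of atoms, with Corollary~\ref{Cor1}, which guarantees that every nonzero term has weight at most $2$. Together these facts say that the weight-$2$ indices inside any atom are determined purely by the atom's type and its starting index. Reading off Table~\ref{TableWts}, the weight-$2$ positions within each atom type are: $V$ and $W$ at offsets $0$ and $4$ inside their six terms, $X$ at offsets $0, 4, 6$ inside its eight terms, $U$ at offset $0$ inside its four terms, and none in $T$. So $P(x)$ decomposes as a sum of local contributions, one per atom, and the only task is to collect them.

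I would first compute $P_S$ directly from $S = T\,U\,V\,W\,U$: its five atoms begin at indices $1, 5, 9, 15, 21$, which by the offsets above give the weight-$2$ indices $5, 9, 13, 15, 19, 21$, so $P_S = x^5 + x^9 + x^{13} + x^{15} + x^{19} + x^{21}$. Next I would compute the contribution of each $R(k)$. Using $\mu(2k) = 2^k$, the block $R(k) = (V\,W)^{2^{k-1}-1}\,X\,U\,(V\,W)^{2^{k-1}-1}\,X\,U$ begins at $n_k = 12 \cdot 2^k + 1$. A single $V\,W$ pair has length $12$ and contributes $x^n P_{VW}$ with $P_{VW} = 1 + x^4 + x^6 + x^{10}$; a single $X\,U$ pair also has length $12$ and contributes $x^n P_{XU}$ with $P_{XU} = 1 + x^4 + x^6 + x^8$. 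Summing the geometric progression $\sum_{i=0}^{2^{k-1}-2} x^{12 i} = (1 - x^{12(2^{k-1}-1)})/(1 - x^{12})$ for the first $(V\,W)^{2^{k-1}-1}$ block and adding the adjacent $X\,U$ gives the contribution of a single copy of $(V\,W)^{2^{k-1}-1}\,X\,U$; since $R(k)$ consists of two such copies laid side by side with the second copy offset from $n_k$ by $12 \cdot 2^{k-1}$, the total contribution of $R(k)$ factors as $x^{n_k}$ times that bracket times $(1 + x^{12 \cdot 2^{k-1}})$. Summing over $k \ge 1$ and adding $P_S$ produces the claimed formula.

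The main obstacle here is essentially absent: once Theorem~\ref{MainTh} and Corollary~\ref{Cor1} are in hand, the proof reduces to bookkeeping in atom lengths and offsets. The one point that requires a moment's care is the boundary case $k = 1$, where $2^{k-1} - 1 = 0$ and the factor $(V\,W)^0$ is the empty word; here the rational factor $(1 - x^{12(2^{k-1}-1)})/(1 - x^{12})$ correctly vanishes, so only the two $X\,U$ pairs of $R(1) = X\,U\,X\,U$ contribute, producing the expected weight-$2$ indices $25, 29, 31, 33, 37, 41, 43, 45$. As a sanity check I would also verify by direct expansion that the low-degree coefficients of $P(x)$ reproduce the listed initial values of $\tau$.
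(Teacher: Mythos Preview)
Your proposal is correct and follows essentially the same route as the paper's proof: both invoke Theorem~\ref{MainTh} for the atom decomposition, read the weight-$2$ offsets from Table~\ref{TableWts}, package each $VW$ and $XU$ pair into the polynomials $P_{VW}$ and $P_{XU}$, and then sum a geometric series over the copies of $VW$ inside each $R(k)$. The paper is terser (it simply declares the proof ``a matter of bookkeeping'' and sketches the $P_{VW}$, $P_{XU}$ derivation), while you carry out the $P_S$ computation and the $k=1$ boundary check explicitly; but there is no real difference in strategy.
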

\begin{proof}
Since we know from Table~\ref{TableWts} exactly where the weight-$2$ terms are 
in each atom, the proof is just a matter of bookkeeping. 
Generating functions with the sequence in the exponents,\footnote{These are known as  ``lacunary'' power series.}
as in \eqn{EqW2a},
seem to work well for sequences related to \seqnum{A029744}, which makes the
bookkeeping easier.

We compute the generating function separately  for each block of terms $R(k)$, $ k \ge 1$,
and then sum on $k$.

The  $R(k)$ block starts in $A$ at index  $n = 12 \mu(2k)+1$, ends at $n = 12 \mu(2k+2)$,
and contains $2 \mu(2k)$ terms of $A$. Also, $R(k)$ contains $2^{k+1}$ atoms, and $2^{k+2}$ 
terms of weight $2$.

There is a weight-2 term at the start of $R(k)$, which we check is $\tau(2^{k+2}-1) = 12 \mu(2k)+1$.
As we move through a $VW$ block in $R(k)$,  the values of $\tau$ increase by $0, 4, 6, 10$,
which we keep track of by multiplying the generating function  by the polynomial $P_{VW} = 1+x^4+x^6+x^{10}$.
Similarly, as we move through an $XU$ block,  the values of $\tau$ increase by $0, 4, 6, 8$,
corresponding to multiplication by $P_{XU} =1+x^4+x^6+x^8$. 
Every new copy of $VW$ in $R(k)$ requires an additional  multiplication by $x^{12}$.
The remaining parts of \eqn{EqW2b} are easily verified. \end{proof}

For example, the terms for $k=1$ and $k=2$ in the summation in \eqn{EqW2b}  are
$$
x^{25} \left(x^{8}+x^{6}+x^{4}+1\right) \left(x^{12}+1\right)
~=~ x^{25}+x^{29}+x^{31}+x^{33}+x^{37}+x^{41}+x^{43}+x^{45}
$$
and
$$
x^{49} \left(\left(x^{8}+x^{6}+x^{4}+1\right) x^{12}+x^{10}+x^{6}+x^{4}+1\right) \left(x^{24}+1\right) ~=~
$$
$$
x^{49}+x^{53}+x^{55}+x^{59}+x^{61}+x^{65}+x^{67}+x^{69}+x^{73}+x^{77}+x^{79}+x^{83}+x^{85}+x^{89}+x^{91}+x^{93},
$$
and the reader can confirm that these exponents are two successive blocks of terms in \seqnum{A354169}.

As an immediate  corollary we get a similar generating function for the complementary sequence
$\{\sigma(n), n \ge 1\}$  (\seqnum{A354767}) that lists the indices of the weight-$1$ terms of $A$:
\begin{coro}\label{CorWt1}
\beql{EqW1a}
 \sum_{n=1}^{\infty} x^{\sigma(n)} ~=~   x/(1-x) -  P(x)\,.
 \eeq
 \end{coro}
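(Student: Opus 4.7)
The plan is to derive Corollary~\ref{CorWt1} directly from Corollary~\ref{Cor1} together with the fact that the index set $\{n : n \ge 1\}$ splits into the weight-$1$ and weight-$2$ index sets in a disjoint fashion. The main work for the weight-$2$ indices has already been done in the preceding theorem, so the corollary amounts to recording the complementary sum.

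First I would invoke Corollary~\ref{Cor1}, which says that every nonzero term of $A$ has binary weight $1$ or $2$. Since $a(n) \ne 0$ for all $n \ge 1$ (indeed $a(1) = 1$, and the main theorem shows $A \setminus \{a(0)\}$ is a concatenation of atoms whose terms are strictly positive), every index $n \ge 1$ belongs to exactly one of the two sequences $\{\sigma(n)\}_{n \ge 1}$ and $\{\tau(n)\}_{n \ge 1}$. Moreover these two sequences are disjoint because a given $a(n)$ has a single, well-defined Hamming weight.

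Next I would translate this partition into generating functions. Because the two families of exponents together exhaust the positive integers without overlap, we have the identity
\begin{equation*}
\sum_{n=1}^{\infty} x^{\sigma(n)} \;+\; \sum_{n=1}^{\infty} x^{\tau(n)} \;=\; \sum_{n=1}^{\infty} x^n \;=\; \frac{x}{1-x}.
\end{equation*}
Subtracting $P(x) = \sum_{n \ge 1} x^{\tau(n)}$, which is the generating function computed in the previous theorem, yields the claimed formula \eqn{EqW1a}. No convergence issues arise since we are working with formal power series in $x$.

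I do not anticipate any real obstacle: the only subtlety is making sure the partition is exhaustive and disjoint, which is a one-line consequence of Corollary~\ref{Cor1} plus the observation that $a(n) > 0$ for $n \ge 1$. In particular, the corollary does not require re-deriving the structure of the atoms; it simply repackages the bookkeeping already performed for $P(x)$.
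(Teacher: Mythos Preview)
Your argument is correct and matches the paper's approach: the paper simply calls this an ``immediate corollary'' obtained from the complementary sequence, which is exactly the partition-of-indices observation you spell out. There is nothing to add.
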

(A more complicated version of this generating function
had previously been proposed in \seqnum{A354767}.)

As a second corollary we obtain an {\em ordinary} generating function for \seqnum{A355150}, the sequence whose $n$th
term $\omega(n)$ gives the Hamming weight of $a(n)$:
\begin{coro}\label{CorWeights}
\beql{EqWeights}
\sum_{n=1}^{\infty} \omega(n) x^n  ~=~  x/(1-x) +  P(x)\,.
\eeq
\end{coro}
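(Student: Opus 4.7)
The plan is to reduce the identity to a bookkeeping exercise using two facts already established in the paper: Corollary~\ref{Cor1}, which says that every term $a(n)$ with $n \ge 1$ has Hamming weight either $1$ or $2$; and the generating function for the weight-$1$ indices given in Corollary~\ref{CorWt1}.

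First I would observe that, as a consequence of Corollary~\ref{Cor1}, the positive integers $\{n \ge 1\}$ partition into the disjoint union $\{\sigma(k)\} \sqcup \{\tau(k)\}$, where $\sigma$ enumerates the weight-$1$ indices and $\tau$ enumerates the weight-$2$ indices. On this partition the function $\omega$ takes only the values $1$ and $2$, so
\[
\sum_{n \ge 1} \omega(n)\, x^n
= \sum_{k \ge 1} x^{\sigma(k)} + 2 \sum_{k \ge 1} x^{\tau(k)}.
\]

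Next I would substitute the two known generating function identities. By definition, $\sum_{k \ge 1} x^{\tau(k)} = P(x)$, and Corollary~\ref{CorWt1} gives $\sum_{k \ge 1} x^{\sigma(k)} = x/(1-x) - P(x)$ (the first term being the generating function for all indices $n \ge 1$). Combining these,
\[
\sum_{n \ge 1} \omega(n)\, x^n
= \bigl(x/(1-x) - P(x)\bigr) + 2P(x)
= x/(1-x) + P(x),
\]
as claimed.

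There is no real obstacle here; the only point that requires a moment of care is that the sum starts at $n=1$, so the convention $a(0)=0$ (of weight $0$) does not contribute, which is consistent with the $x/(1-x)$ term counting only indices $n\ge 1$. Once Corollary~\ref{Cor1} and Corollary~\ref{CorWt1} are in hand, the identity is immediate.
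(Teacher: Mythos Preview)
Your proof is correct and matches the paper's implicit reasoning (the paper states this corollary without proof, treating it as immediate from Corollary~\ref{Cor1} and the definition of $P(x)$). A marginally shorter route bypasses Corollary~\ref{CorWt1} entirely: since $\omega(n)\in\{1,2\}$ for all $n\ge 1$, one has $\omega(n)=1+\mathbf{1}_{\{\omega(n)=2\}}$, whence $\sum_{n\ge 1}\omega(n)x^n=\sum_{n\ge 1}x^n+\sum_{k\ge 1}x^{\tau(k)}=x/(1-x)+P(x)$; but this is the same idea.
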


Here are two further examples.
Suppose  the $i$-th weight-$2$ term of $A$ is $2^{e(i)} + 2^{f(i)}$, with $e(i) < f(i)$. 
The sequences $\{e(i)\}$ and $\{f(i)\}$  form entries \seqnum{A354773} 
and \seqnum{A354774} respectively. These entries contain conjectured formulas which now follow from our main theorem (we will not repeat them here).



\bigskip
\hrule
\bigskip

Concerned with sequences
\seqnum{A000120},
\seqnum{A029744},
\seqnum{A090252},
\seqnum{A098550},
\seqnum{A121216},
\seqnum{A252867},
\seqnum{A347113},
\seqnum{A353708},
\seqnum{A353712},
\seqnum{A354169},
\seqnum{A354680},
\seqnum{A354767},
\seqnum{A354773},
\seqnum{A354774},
\seqnum{A354798},
\seqnum{A355150}.

\bigskip
\hrule
\bigskip

\noindent 2010 Mathematics Subject Classification 11B37, 11B83


\section{Appendix:  Tables~\ref{TableSR1} and \ref{TableXUgives}.}\label{SecApp}

\begin{longtable}{|l|l|l|l|l|}
\caption{The sequence  $A$ begins with $S R(1)$,
$S R(1) \rightarrow S R(1) R(2)$, and $R(2)$ satisfies the ancestor property.}\label{TableSR1} \\
\hline
Input atoms & Input terms & Free bits & Output terms & Output atoms\\
\cline{1-1}
\cline{2-2}
\cline{3-3}
\cline{4-4}
\cline{5-5}
\multirow{8}{*}{$T_{S}$} & \multirow{2}{*}{$a_{1}=$\colorbox{red!30}{$2^{0}$}} & {\mbox{None}} & $a_{1}=$$2^{0}$ & \multirow{4}{*}{$T_{S}$}\\
\cline{3-3}
\cline{4-4}
 &  & {\mbox{None}} & $a_{2}=$$2^{1}$ & \\
\cline{2-2}
\cline{3-3}
\cline{4-4}
 & \multirow{2}{*}{$a_{2}=$\colorbox{green!30}{$2^{1}$}} & \colorbox{red!30}{$2^{0}$} & $a_{3}=$$2^{2}$ & \\
\cline{3-3}
\cline{4-4}
 &  & \colorbox{red!30}{$2^{0}$} & $a_{4}=$$2^{3}$ & \\
\cline{2-2}
\cline{3-3}
\cline{4-4}
\cline{5-5}
 & \multirow{2}{*}{$a_{3}=$\colorbox{blue!30}{$2^{2}$}} & \colorbox{red!30}{$2^{0}$}, \colorbox{green!30}{$2^{1}$} & $a_{5}=$\colorbox{red!30}{$2^{0}$}$ + $\colorbox{green!30}{$2^{1}$} & \multirow{4}{*}{$U_{S}$}\\
\cline{3-3}
\cline{4-4}
 &  & {\mbox{None}} & $a_{6}=$$2^{4}$ & \\
\cline{2-2}
\cline{3-3}
\cline{4-4}
 & \multirow{2}{*}{$a_{4}=$\colorbox{cyan!30}{$2^{3}$}} & \colorbox{blue!30}{$2^{2}$} & $a_{7}=$$2^{5}$ & \\
\cline{3-3}
\cline{4-4}
 &  & \colorbox{blue!30}{$2^{2}$} & $a_{8}=$$2^{6}$ & \\
\cline{1-1}
\cline{2-2}
\cline{3-3}
\cline{4-4}
\cline{5-5}
\multirow{8}{*}{$U_{S}$} & \multirow{2}{*}{$a_{5}=$\colorbox{red!30}{$2^{0}$}$ + $\colorbox{green!30}{$2^{1}$}} & \colorbox{blue!30}{$2^{2}$}, \colorbox{cyan!30}{$2^{3}$} & $a_{9}=$\colorbox{blue!30}{$2^{2}$}$ + $\colorbox{cyan!30}{$2^{3}$} & \multirow{6}{*}{$V_{S}$}\\
\cline{3-3}
\cline{4-4}
 &  & {\mbox{None}} & $a_{10}=$$2^{7}$ & \\
\cline{2-2}
\cline{3-3}
\cline{4-4}
 & \multirow{2}{*}{$a_{6}=$\colorbox{magenta!30}{$2^{4}$}} & \colorbox{red!30}{$2^{0}$}, \colorbox{green!30}{$2^{1}$} & $a_{11}=$$2^{8}$ & \\
\cline{3-3}
\cline{4-4}
 &  & \colorbox{red!30}{$2^{0}$}, \colorbox{green!30}{$2^{1}$} & $a_{12}=$$2^{9}$ & \\
\cline{2-2}
\cline{3-3}
\cline{4-4}
 & \multirow{2}{*}{$a_{7}=$\colorbox{yellow!30}{$2^{5}$}} & \colorbox{red!30}{$2^{0}$}, \colorbox{green!30}{$2^{1}$}, \colorbox{magenta!30}{$2^{4}$} & $a_{13}=$\colorbox{red!30}{$2^{0}$}$ + $\colorbox{magenta!30}{$2^{4}$} & \\
\cline{3-3}
\cline{4-4}
 &  & \colorbox{green!30}{$2^{1}$} & $a_{14}=$$2^{10}$ & \\
\cline{2-2}
\cline{3-3}
\cline{4-4}
\cline{5-5}
 & \multirow{2}{*}{$a_{8}=$\colorbox{gray!30}{$2^{6}$}} & \colorbox{green!30}{$2^{1}$}, \colorbox{yellow!30}{$2^{5}$} & $a_{15}=$\colorbox{green!30}{$2^{1}$}$ + $\colorbox{yellow!30}{$2^{5}$} & \multirow{6}{*}{$W_{S}$}\\
\cline{3-3}
\cline{4-4}
 &  & {\mbox{None}} & $a_{16}=$$2^{11}$ & \\
\cline{1-1}
\cline{2-2}
\cline{3-3}
\cline{4-4}
\multirow{12}{*}{$V_{S}$} & \multirow{2}{*}{$a_{9}=$\colorbox{blue!30}{$2^{2}$}$ + $\colorbox{cyan!30}{$2^{3}$}} & \colorbox{gray!30}{$2^{6}$} & $a_{17}=$$2^{12}$ & \\
\cline{3-3}
\cline{4-4}
 &  & \colorbox{gray!30}{$2^{6}$} & $a_{18}=$$2^{13}$ & \\
\cline{2-2}
\cline{3-3}
\cline{4-4}
 & \multirow{2}{*}{$a_{10}=$\colorbox{brown!30}{$2^{7}$}} & \colorbox{blue!30}{$2^{2}$}, \colorbox{cyan!30}{$2^{3}$}, \colorbox{gray!30}{$2^{6}$} & $a_{19}=$\colorbox{blue!30}{$2^{2}$}$ + $\colorbox{gray!30}{$2^{6}$} & \\
\cline{3-3}
\cline{4-4}
 &  & \colorbox{cyan!30}{$2^{3}$} & $a_{20}=$$2^{14}$ & \\
\cline{2-2}
\cline{3-3}
\cline{4-4}
\cline{5-5}
 & \multirow{2}{*}{$a_{11}=$\colorbox{lime!30}{$2^{8}$}} & \colorbox{cyan!30}{$2^{3}$}, \colorbox{brown!30}{$2^{7}$} & $a_{21}=$\colorbox{cyan!30}{$2^{3}$}$ + $\colorbox{brown!30}{$2^{7}$} & \multirow{4}{*}{$U_{S}$}\\
\cline{3-3}
\cline{4-4}
 &  & {\mbox{None}} & $a_{22}=$$2^{15}$ & \\
\cline{2-2}
\cline{3-3}
\cline{4-4}
 & \multirow{2}{*}{$a_{12}=$\colorbox{olive!30}{$2^{9}$}} & \colorbox{lime!30}{$2^{8}$} & $a_{23}=$$2^{16}$ & \\
\cline{3-3}
\cline{4-4}
 &  & \colorbox{lime!30}{$2^{8}$} & $a_{24}=$$2^{17}$ & \\
\cline{2-2}
\cline{3-3}
\cline{4-4}
\cline{5-5}
 & \multirow{2}{*}{$a_{13}=$\colorbox{red!30}{$2^{0}$}$ + $\colorbox{magenta!30}{$2^{4}$}} & \colorbox{lime!30}{$2^{8}$}, \colorbox{olive!30}{$2^{9}$} & $a_{25}=$\colorbox{lime!30}{$2^{8}$}$ + $\colorbox{olive!30}{$2^{9}$} & \multirow{8}{*}{$X_{R(1)}$}\\
\cline{3-3}
\cline{4-4}
 &  & {\mbox{None}} & $a_{26}=$$2^{18}$ & \\
\cline{2-2}
\cline{3-3}
\cline{4-4}
 & \multirow{2}{*}{$a_{14}=$\colorbox{orange!30}{$2^{10}$}} & \colorbox{red!30}{$2^{0}$}, \colorbox{magenta!30}{$2^{4}$} & $a_{27}=$$2^{19}$ & \\
\cline{3-3}
\cline{4-4}
 &  & \colorbox{red!30}{$2^{0}$}, \colorbox{magenta!30}{$2^{4}$} & $a_{28}=$$2^{20}$ & \\
\cline{1-1}
\cline{2-2}
\cline{3-3}
\cline{4-4}
\multirow{12}{*}{$W_{S}$} & \multirow{2}{*}{$a_{15}=$\colorbox{green!30}{$2^{1}$}$ + $\colorbox{yellow!30}{$2^{5}$}} & \colorbox{red!30}{$2^{0}$}, \colorbox{magenta!30}{$2^{4}$}, \colorbox{orange!30}{$2^{10}$} & $a_{29}=$\colorbox{red!30}{$2^{0}$}$ + $\colorbox{orange!30}{$2^{10}$} & \\
\cline{3-3}
\cline{4-4}
 &  & \colorbox{magenta!30}{$2^{4}$} & $a_{30}=$$2^{21}$ & \\
\cline{2-2}
\cline{3-3}
\cline{4-4}
 & \multirow{2}{*}{$a_{16}=$\colorbox{pink!30}{$2^{11}$}} & \colorbox{green!30}{$2^{1}$}, \colorbox{magenta!30}{$2^{4}$}, \colorbox{yellow!30}{$2^{5}$} & $a_{31}=$\colorbox{green!30}{$2^{1}$}$ + $\colorbox{magenta!30}{$2^{4}$} & \\
\cline{3-3}
\cline{4-4}
 &  & \colorbox{yellow!30}{$2^{5}$} & $a_{32}=$$2^{22}$ & \\
\cline{2-2}
\cline{3-3}
\cline{4-4}
\cline{5-5}
 & \multirow{2}{*}{$a_{17}=$\colorbox{purple!30}{$2^{12}$}} & \colorbox{yellow!30}{$2^{5}$}, \colorbox{pink!30}{$2^{11}$} & $a_{33}=$\colorbox{yellow!30}{$2^{5}$}$ + $\colorbox{pink!30}{$2^{11}$} & \multirow{4}{*}{$U_{R(1)}$}\\
\cline{3-3}
\cline{4-4}
 &  & {\mbox{None}} & $a_{34}=$$2^{23}$ & \\
\cline{2-2}
\cline{3-3}
\cline{4-4}
 & \multirow{2}{*}{$a_{18}=$\colorbox{teal!30}{$2^{13}$}} & \colorbox{purple!30}{$2^{12}$} & $a_{35}=$$2^{24}$ & \\
\cline{3-3}
\cline{4-4}
 &  & \colorbox{purple!30}{$2^{12}$} & $a_{36}=$$2^{25}$ & \\
\cline{2-2}
\cline{3-3}
\cline{4-4}
\cline{5-5}
 & \multirow{2}{*}{$a_{19}=$\colorbox{blue!30}{$2^{2}$}$ + $\colorbox{gray!30}{$2^{6}$}} & \colorbox{purple!30}{$2^{12}$}, \colorbox{teal!30}{$2^{13}$} & $a_{37}=$\colorbox{purple!30}{$2^{12}$}$ + $\colorbox{teal!30}{$2^{13}$} & \multirow{8}{*}{$X_{R(1)}$}\\
\cline{3-3}
\cline{4-4}
 &  & {\mbox{None}} & $a_{38}=$$2^{26}$ & \\
\cline{2-2}
\cline{3-3}
\cline{4-4}
 & \multirow{2}{*}{$a_{20}=$\colorbox{violet!30}{$2^{14}$}} & \colorbox{blue!30}{$2^{2}$}, \colorbox{gray!30}{$2^{6}$} & $a_{39}=$$2^{27}$ & \\
\cline{3-3}
\cline{4-4}
 &  & \colorbox{blue!30}{$2^{2}$}, \colorbox{gray!30}{$2^{6}$} & $a_{40}=$$2^{28}$ & \\
\cline{1-1}
\cline{2-2}
\cline{3-3}
\cline{4-4}
\multirow{8}{*}{$U_{S}$} & \multirow{2}{*}{$a_{21}=$\colorbox{cyan!30}{$2^{3}$}$ + $\colorbox{brown!30}{$2^{7}$}} & \colorbox{blue!30}{$2^{2}$}, \colorbox{gray!30}{$2^{6}$}, \colorbox{violet!30}{$2^{14}$} & $a_{41}=$\colorbox{blue!30}{$2^{2}$}$ + $\colorbox{violet!30}{$2^{14}$} & \\
\cline{3-3}
\cline{4-4}
 &  & \colorbox{gray!30}{$2^{6}$} & $a_{42}=$$2^{29}$ & \\
\cline{2-2}
\cline{3-3}
\cline{4-4}
 & \multirow{2}{*}{$a_{22}=$\colorbox{red!30}{$2^{15}$}} & \colorbox{cyan!30}{$2^{3}$}, \colorbox{gray!30}{$2^{6}$}, \colorbox{brown!30}{$2^{7}$} & $a_{43}=$\colorbox{cyan!30}{$2^{3}$}$ + $\colorbox{gray!30}{$2^{6}$} & \\
\cline{3-3}
\cline{4-4}
 &  & \colorbox{brown!30}{$2^{7}$} & $a_{44}=$$2^{30}$ & \\
\cline{2-2}
\cline{3-3}
\cline{4-4}
\cline{5-5}
 & \multirow{2}{*}{$a_{23}=$\colorbox{green!30}{$2^{16}$}} & \colorbox{brown!30}{$2^{7}$}, \colorbox{red!30}{$2^{15}$} & $a_{45}=$\colorbox{brown!30}{$2^{7}$}$ + $\colorbox{red!30}{$2^{15}$} & \multirow{4}{*}{$U_{R(1)}$}\\
\cline{3-3}
\cline{4-4}
 &  & {\mbox{None}} & $a_{46}=$$2^{31}$ & \\
\cline{2-2}
\cline{3-3}
\cline{4-4}
 & \multirow{2}{*}{$a_{24}=$\colorbox{blue!30}{$2^{17}$}} & \colorbox{green!30}{$2^{16}$} & $a_{47}=$$2^{32}$ & \\
\cline{3-3}
\cline{4-4}
 &  & \colorbox{green!30}{$2^{16}$} & $a_{48}=$$2^{33}$ & \\
\cline{1-1}
\cline{2-2}
\cline{3-3}
\cline{4-4}
\cline{5-5}
\multirow{16}{*}{$X_{R(1)}$} & \multirow{2}{*}{$a_{25}=$\colorbox{lime!30}{$2^{8}$}$ + $\colorbox{olive!30}{$2^{9}$}} & \colorbox{green!30}{$2^{16}$}, \colorbox{blue!30}{$2^{17}$} & $a_{49}=$\colorbox{green!30}{$2^{16}$}$ + $\colorbox{blue!30}{$2^{17}$} & \multirow{6}{*}{$V_{R(2)}$}\\
\cline{3-3}
\cline{4-4}
 &  & {\mbox{None}} & $a_{50}=$$2^{34}$ & \\
\cline{2-2}
\cline{3-3}
\cline{4-4}
 & \multirow{2}{*}{$a_{26}=$\colorbox{cyan!30}{$2^{18}$}} & \colorbox{lime!30}{$2^{8}$}, \colorbox{olive!30}{$2^{9}$} & $a_{51}=$$2^{35}$ & \\
\cline{3-3}
\cline{4-4}
 &  & \colorbox{lime!30}{$2^{8}$}, \colorbox{olive!30}{$2^{9}$} & $a_{52}=$$2^{36}$ & \\
\cline{2-2}
\cline{3-3}
\cline{4-4}
 & \multirow{2}{*}{$a_{27}=$\colorbox{magenta!30}{$2^{19}$}} & \colorbox{lime!30}{$2^{8}$}, \colorbox{olive!30}{$2^{9}$}, \colorbox{cyan!30}{$2^{18}$} & $a_{53}=$\colorbox{lime!30}{$2^{8}$}$ + $\colorbox{cyan!30}{$2^{18}$} & \\
\cline{3-3}
\cline{4-4}
 &  & \colorbox{olive!30}{$2^{9}$} & $a_{54}=$$2^{37}$ & \\
\cline{2-2}
\cline{3-3}
\cline{4-4}
\cline{5-5}
 & \multirow{2}{*}{$a_{28}=$\colorbox{yellow!30}{$2^{20}$}} & \colorbox{olive!30}{$2^{9}$}, \colorbox{magenta!30}{$2^{19}$} & $a_{55}=$\colorbox{olive!30}{$2^{9}$}$ + $\colorbox{magenta!30}{$2^{19}$} & \multirow{6}{*}{$W_{R(2)}$}\\
\cline{3-3}
\cline{4-4}
 &  & {\mbox{None}} & $a_{56}=$$2^{38}$ & \\
\cline{2-2}
\cline{3-3}
\cline{4-4}
 & \multirow{2}{*}{$a_{29}=$\colorbox{red!30}{$2^{0}$}$ + $\colorbox{orange!30}{$2^{10}$}} & \colorbox{yellow!30}{$2^{20}$} & $a_{57}=$$2^{39}$ & \\
\cline{3-3}
\cline{4-4}
 &  & \colorbox{yellow!30}{$2^{20}$} & $a_{58}=$$2^{40}$ & \\
\cline{2-2}
\cline{3-3}
\cline{4-4}
 & \multirow{2}{*}{$a_{30}=$\colorbox{gray!30}{$2^{21}$}} & \colorbox{red!30}{$2^{0}$}, \colorbox{orange!30}{$2^{10}$}, \colorbox{yellow!30}{$2^{20}$} & $a_{59}=$\colorbox{red!30}{$2^{0}$}$ + $\colorbox{yellow!30}{$2^{20}$} & \\
\cline{3-3}
\cline{4-4}
 &  & \colorbox{orange!30}{$2^{10}$} & $a_{60}=$$2^{41}$ & \\
\cline{2-2}
\cline{3-3}
\cline{4-4}
\cline{5-5}
 & \multirow{2}{*}{$a_{31}=$\colorbox{green!30}{$2^{1}$}$ + $\colorbox{magenta!30}{$2^{4}$}} & \colorbox{orange!30}{$2^{10}$}, \colorbox{gray!30}{$2^{21}$} & $a_{61}=$\colorbox{orange!30}{$2^{10}$}$ + $\colorbox{gray!30}{$2^{21}$} & \multirow{8}{*}{$X_{R(2)}$}\\
\cline{3-3}
\cline{4-4}
 &  & {\mbox{None}} & $a_{62}=$$2^{42}$ & \\
\cline{2-2}
\cline{3-3}
\cline{4-4}
 & \multirow{2}{*}{$a_{32}=$\colorbox{brown!30}{$2^{22}$}} & \colorbox{green!30}{$2^{1}$}, \colorbox{magenta!30}{$2^{4}$} & $a_{63}=$$2^{43}$ & \\
\cline{3-3}
\cline{4-4}
 &  & \colorbox{green!30}{$2^{1}$}, \colorbox{magenta!30}{$2^{4}$} & $a_{64}=$$2^{44}$ & \\
\cline{1-1}
\cline{2-2}
\cline{3-3}
\cline{4-4}
\multirow{8}{*}{$U_{R(1)}$} & \multirow{2}{*}{$a_{33}=$\colorbox{yellow!30}{$2^{5}$}$ + $\colorbox{pink!30}{$2^{11}$}} & \colorbox{green!30}{$2^{1}$}, \colorbox{magenta!30}{$2^{4}$}, \colorbox{brown!30}{$2^{22}$} & $a_{65}=$\colorbox{green!30}{$2^{1}$}$ + $\colorbox{brown!30}{$2^{22}$} & \\
\cline{3-3}
\cline{4-4}
 &  & \colorbox{magenta!30}{$2^{4}$} & $a_{66}=$$2^{45}$ & \\
\cline{2-2}
\cline{3-3}
\cline{4-4}
 & \multirow{2}{*}{$a_{34}=$\colorbox{lime!30}{$2^{23}$}} & \colorbox{magenta!30}{$2^{4}$}, \colorbox{yellow!30}{$2^{5}$}, \colorbox{pink!30}{$2^{11}$} & $a_{67}=$\colorbox{magenta!30}{$2^{4}$}$ + $\colorbox{yellow!30}{$2^{5}$} & \\
\cline{3-3}
\cline{4-4}
 &  & \colorbox{pink!30}{$2^{11}$} & $a_{68}=$$2^{46}$ & \\
\cline{2-2}
\cline{3-3}
\cline{4-4}
\cline{5-5}
 & \multirow{2}{*}{$a_{35}=$\colorbox{olive!30}{$2^{24}$}} & \colorbox{pink!30}{$2^{11}$}, \colorbox{lime!30}{$2^{23}$} & $a_{69}=$\colorbox{pink!30}{$2^{11}$}$ + $\colorbox{lime!30}{$2^{23}$} & \multirow{4}{*}{$U_{R(2)}$ [See Note 1]}\\
\cline{3-3}
\cline{4-4}
 &  & {\mbox{None}} & $a_{70}=$$2^{47}$ & \\
\cline{2-2}
\cline{3-3}
\cline{4-4}
 & \multirow{2}{*}{$a_{36}=$\colorbox{orange!30}{$2^{25}$}} & \colorbox{olive!30}{$2^{24}$} & $a_{71}=$$2^{48}$ & \\
\cline{3-3}
\cline{4-4}
 &  & \colorbox{olive!30}{$2^{24}$} & $a_{72}=$$2^{49}$ & \\
\cline{1-1}
\cline{2-2}
\cline{3-3}
\cline{4-4}
\cline{5-5}
\multirow{16}{*}{$X_{R(1)}$} & \multirow{2}{*}{$a_{37}=$\colorbox{purple!30}{$2^{12}$}$ + $\colorbox{teal!30}{$2^{13}$}} & \colorbox{olive!30}{$2^{24}$}, \colorbox{orange!30}{$2^{25}$} & $a_{73}=$\colorbox{olive!30}{$2^{24}$}$ + $\colorbox{orange!30}{$2^{25}$} & \multirow{6}{*}{$V_{R(2)}$}\\
\cline{3-3}
\cline{4-4}
 &  & {\mbox{None}} & $a_{74}=$$2^{50}$ & \\
\cline{2-2}
\cline{3-3}
\cline{4-4}
 & \multirow{2}{*}{$a_{38}=$\colorbox{pink!30}{$2^{26}$}} & \colorbox{purple!30}{$2^{12}$}, \colorbox{teal!30}{$2^{13}$} & $a_{75}=$$2^{51}$ & \\
\cline{3-3}
\cline{4-4}
 &  & \colorbox{purple!30}{$2^{12}$}, \colorbox{teal!30}{$2^{13}$} & $a_{76}=$$2^{52}$ & \\
\cline{2-2}
\cline{3-3}
\cline{4-4}
 & \multirow{2}{*}{$a_{39}=$\colorbox{purple!30}{$2^{27}$}} & \colorbox{purple!30}{$2^{12}$}, \colorbox{teal!30}{$2^{13}$}, \colorbox{pink!30}{$2^{26}$} & $a_{77}=$\colorbox{purple!30}{$2^{12}$}$ + $\colorbox{pink!30}{$2^{26}$} & \\
\cline{3-3}
\cline{4-4}
 &  & \colorbox{teal!30}{$2^{13}$} & $a_{78}=$$2^{53}$ & \\
\cline{2-2}
\cline{3-3}
\cline{4-4}
\cline{5-5}
 & \multirow{2}{*}{$a_{40}=$\colorbox{teal!30}{$2^{28}$}} & \colorbox{teal!30}{$2^{13}$}, \colorbox{purple!30}{$2^{27}$} & $a_{79}=$\colorbox{teal!30}{$2^{13}$}$ + $\colorbox{purple!30}{$2^{27}$} & \multirow{6}{*}{$W_{R(2)}$}\\
\cline{3-3}
\cline{4-4}
 &  & {\mbox{None}} & $a_{80}=$$2^{54}$ & \\
\cline{2-2}
\cline{3-3}
\cline{4-4}
 & \multirow{2}{*}{$a_{41}=$\colorbox{blue!30}{$2^{2}$}$ + $\colorbox{violet!30}{$2^{14}$}} & \colorbox{teal!30}{$2^{28}$} & $a_{81}=$$2^{55}$ & \\
\cline{3-3}
\cline{4-4}
 &  & \colorbox{teal!30}{$2^{28}$} & $a_{82}=$$2^{56}$ & \\
\cline{2-2}
\cline{3-3}
\cline{4-4}
 & \multirow{2}{*}{$a_{42}=$\colorbox{violet!30}{$2^{29}$}} & \colorbox{blue!30}{$2^{2}$}, \colorbox{violet!30}{$2^{14}$}, \colorbox{teal!30}{$2^{28}$} & $a_{83}=$\colorbox{blue!30}{$2^{2}$}$ + $\colorbox{teal!30}{$2^{28}$} & \\
\cline{3-3}
\cline{4-4}
 &  & \colorbox{violet!30}{$2^{14}$} & $a_{84}=$$2^{57}$ & \\
\cline{2-2}
\cline{3-3}
\cline{4-4}
\cline{5-5}
 & \multirow{2}{*}{$a_{43}=$\colorbox{cyan!30}{$2^{3}$}$ + $\colorbox{gray!30}{$2^{6}$}} & \colorbox{violet!30}{$2^{14}$}, \colorbox{violet!30}{$2^{29}$} & $a_{85}=$\colorbox{violet!30}{$2^{14}$}$ + $\colorbox{violet!30}{$2^{29}$} & \multirow{8}{*}{$X_{R(2)}$}\\
\cline{3-3}
\cline{4-4}
 &  & {\mbox{None}} & $a_{86}=$$2^{58}$ & \\
\cline{2-2}
\cline{3-3}
\cline{4-4}
 & \multirow{2}{*}{$a_{44}=$\colorbox{red!30}{$2^{30}$}} & \colorbox{cyan!30}{$2^{3}$}, \colorbox{gray!30}{$2^{6}$} & $a_{87}=$$2^{59}$ & \\
\cline{3-3}
\cline{4-4}
 &  & \colorbox{cyan!30}{$2^{3}$}, \colorbox{gray!30}{$2^{6}$} & $a_{88}=$$2^{60}$ & \\
\cline{1-1}
\cline{2-2}
\cline{3-3}
\cline{4-4}
\multirow{8}{*}{$U_{R(1)}$} & \multirow{2}{*}{$a_{45}=$\colorbox{brown!30}{$2^{7}$}$ + $\colorbox{red!30}{$2^{15}$}} & \colorbox{cyan!30}{$2^{3}$}, \colorbox{gray!30}{$2^{6}$}, \colorbox{red!30}{$2^{30}$} & $a_{89}=$\colorbox{cyan!30}{$2^{3}$}$ + $\colorbox{red!30}{$2^{30}$} & \\
\cline{3-3}
\cline{4-4}
 &  & \colorbox{gray!30}{$2^{6}$} & $a_{90}=$$2^{61}$ & \\
\cline{2-2}
\cline{3-3}
\cline{4-4}
 & \multirow{2}{*}{$a_{46}=$\colorbox{green!30}{$2^{31}$}} & \colorbox{gray!30}{$2^{6}$}, \colorbox{brown!30}{$2^{7}$}, \colorbox{red!30}{$2^{15}$} & $a_{91}=$\colorbox{gray!30}{$2^{6}$}$ + $\colorbox{brown!30}{$2^{7}$} & \\
\cline{3-3}
\cline{4-4}
 &  & \colorbox{red!30}{$2^{15}$} & $a_{92}=$$2^{62}$ & \\
\cline{2-2}
\cline{3-3}
\cline{4-4}
\cline{5-5}
 & \multirow{2}{*}{$a_{47}=$\colorbox{blue!30}{$2^{32}$}} & \colorbox{red!30}{$2^{15}$}, \colorbox{green!30}{$2^{31}$} & $a_{93}=$\colorbox{red!30}{$2^{15}$}$ + $\colorbox{green!30}{$2^{31}$} & \multirow{4}{*}{$U_{R(2)}$ [See Note 2]}\\
\cline{3-3}
\cline{4-4}
 &  & {\mbox{None}} & $a_{94}=$$2^{63}$ & \\
\cline{2-2}
\cline{3-3}
\cline{4-4}
 & \multirow{2}{*}{$a_{48}=$\colorbox{cyan!30}{$2^{33}$}} & \colorbox{blue!30}{$2^{32}$} & $a_{95}=$$2^{64}$ & \\
\cline{3-3}
\cline{4-4}
 &  & \colorbox{blue!30}{$2^{32}$} & $a_{96}=$$2^{65}$ & \\
\cline{1-1}
\cline{2-2}
\cline{3-3}
\cline{4-4}
\cline{5-5}
 &  & \colorbox{blue!30}{$2^{32}$}, \colorbox{cyan!30}{$2^{33}$} &  & \\
\hline
\end{longtable}

Note 1: We see that the ancestor property is satisfied for the terms $a_{67}$ through $a_{72}$:
	$4 < 5$, $11 < 23$, $5 < 23$;
	$a_{33} = 2^5 + 2^{11}$ is the common ancestor;
	$2^{23}$ has only been combined with $2^{11}$ so far, so we can add $2^{23}$ and $2^{5}$ to form a new term.

\noindent
Note 2: We see that the ancestor property is satisfied for the terms $a_{91}$ through $a_{96}$:
	$6 < 7$, $15 < 31$, $7 < 31$;
	$a_{45} = 2^7 + 2^{15}$ is the common ancestor;
	$2^{31}$ has only been combined with $2^{15}$ so far, so we can add $2^{31}$ and $2^{7}$ to form a new term.


\begin{longtable}[h]{|p{2cm}|p{3cm}|p{3cm}|p{3cm}|p{2cm}|}
\caption{ $X U$ (from $R(k)$ with $k > 1$) $\rightarrow$ $V W X U$ with the ancestor property. }\label{TableXUgives} \\
\hline
Input atoms & Input terms & Free bits & Output terms & Output atoms\\
\cline{1-1}
\cline{2-2}
\cline{3-3}
\cline{4-4}
\cline{5-5}
\multirow{16}{*}{X} & \multirow{2}{*}{\colorbox{green!30}{$2^{b}$} + \colorbox{blue!30}{$2^{c}$} \newline ($b < c$)} & \colorbox{red!30}{$2^{o}$}, \colorbox{green!30}{$2^{p}$} \newline (From the prior atom) & \colorbox{red!30}{$2^{o}$} + \colorbox{green!30}{$2^{p}$} & \multirow{6}{*}{V}\\
\cline{3-3}
\cline{4-4}
 &  & \mbox{None} & $2^{r}$ & \\
\cline{2-2}
\cline{3-3}
\cline{4-4}
 & \multirow{2}{*}{\colorbox{olive!30}{$2^{j}$}} & \colorbox{green!30}{$2^{b}$}, \colorbox{blue!30}{$2^{c}$} & $2^{r+1}$ & \\
\cline{3-3}
\cline{4-4}
 &  & \colorbox{green!30}{$2^{b}$}, \colorbox{blue!30}{$2^{c}$} & $2^{r+2}$ & \\
\cline{2-2}
\cline{3-3}
\cline{4-4}
 & \multirow{2}{*}{\colorbox{orange!30}{$2^{j+1}$}} & \colorbox{green!30}{$2^{b}$}, \colorbox{blue!30}{$2^{c}$}, \colorbox{olive!30}{$2^{j}$} \newline (We can combine $2^j$ with anything; we combine $2^j$ with the least of $2^b$, $2^c$) & \colorbox{green!30}{$2^{b}$} + \colorbox{olive!30}{$2^{j}$} & \\
\cline{3-3}
\cline{4-4}
 &  & \colorbox{blue!30}{$2^{c}$} & $2^{r+3}$ & \\
\cline{2-2}
\cline{3-3}
\cline{4-4}
\cline{5-5}
 & \multirow{2}{*}{\colorbox{pink!30}{$2^{j+2}$}} & \colorbox{blue!30}{$2^{c}$}, \colorbox{orange!30}{$2^{j+1}$} \newline (We can combine $2^{j+1}$ with anything) & \colorbox{blue!30}{$2^{c}$} + \colorbox{orange!30}{$2^{j+1}$} & \multirow{6}{*}{W}\\
\cline{3-3}
\cline{4-4}
 &  & \mbox{None} & $2^{r+4}$ & \\
\cline{2-2}
\cline{3-3}
\cline{4-4}
 & \multirow{2}{*}{\colorbox{cyan!30}{$2^{d}$} + \colorbox{magenta!30}{$2^{e}$} \newline ($d < e$)} & \colorbox{pink!30}{$2^{j+2}$} & $2^{r+5}$ & \\
\cline{3-3}
\cline{4-4}
 &  & \colorbox{pink!30}{$2^{j+2}$} & $2^{r+6}$ & \\
\cline{2-2}
\cline{3-3}
\cline{4-4}
 & \multirow{2}{*}{\colorbox{purple!30}{$2^{j+3}$}} & \colorbox{cyan!30}{$2^{d}$}, \colorbox{magenta!30}{$2^{e}$}, \colorbox{pink!30}{$2^{j+2}$} \newline (We can combine $2^{j+2}$ with anything; and $2^{j+2}$ with the least of $2^d$, $2^e$) & \colorbox{cyan!30}{$2^{d}$} + \colorbox{pink!30}{$2^{j+2}$} & \\
\cline{3-3}
\cline{4-4}
 &  & \colorbox{magenta!30}{$2^{e}$} & $2^{r+7}$ & \\
\cline{2-2}
\cline{3-3}
\cline{4-4}
\cline{5-5}
 & \multirow{2}{*}{\colorbox{yellow!30}{$2^{f}$} + \colorbox{gray!30}{$2^{g}$} \newline ($f < g$)} & \colorbox{magenta!30}{$2^{e}$}, \colorbox{purple!30}{$2^{j+3}$} \newline (We can combine $2^{j+3}$ with anything) & \colorbox{magenta!30}{$2^{e}$} + \colorbox{purple!30}{$2^{j+3}$} & \multirow{8}{*}{X}\\
\cline{3-3}
\cline{4-4}
 &  & \mbox{None} & $2^{r+8}$ & \\
\cline{2-2}
\cline{3-3}
\cline{4-4}
 & \multirow{2}{*}{\colorbox{teal!30}{$2^{j+4}$}} & \colorbox{yellow!30}{$2^{f}$}, \colorbox{gray!30}{$2^{g}$} & $2^{r+9}$ & \\
\cline{3-3}
\cline{4-4}
 &  & \colorbox{yellow!30}{$2^{f}$}, \colorbox{gray!30}{$2^{g}$} & $2^{r+10}$ & \\
\cline{1-1}
\cline{2-2}
\cline{3-3}
\cline{4-4}
\multirow{8}{*}{U} & \multirow{2}{*}{\colorbox{brown!30}{$2^{h}$} + \colorbox{lime!30}{$2^{i}$} \newline ($g, h < i$)} & \colorbox{yellow!30}{$2^{f}$}, \colorbox{gray!30}{$2^{g}$}, \colorbox{teal!30}{$2^{j+4}$} \newline (We can combine $2^{j+4}$ with anything; and $2^{j+4}$ with the least of $2^f$, $2^g$) & \colorbox{yellow!30}{$2^{f}$} + \colorbox{teal!30}{$2^{j+4}$} & \\
\cline{3-3}
\cline{4-4}
 &  & \colorbox{gray!30}{$2^{g}$} & $2^{r+11}$ & \\
\cline{2-2}
\cline{3-3}
\cline{4-4}
 & \multirow{2}{*}{\colorbox{violet!30}{$2^{j+5}$}} & \colorbox{gray!30}{$2^{g}$}, \colorbox{brown!30}{$2^{h}$}, \colorbox{lime!30}{$2^{i}$} \newline (By the ancestor property: $2^h + 2^g$ is the ancestor term, and $2^g + 2^i$ is possible) & \colorbox{gray!30}{$2^{g}$} + \colorbox{lime!30}{$2^{i}$} \newline (Note that $g < i$) & \\
\cline{3-3}
\cline{4-4}
 &  & \colorbox{brown!30}{$2^{h}$} & $2^{r+12}$ & \\
\cline{2-2}
\cline{3-3}
\cline{4-4}
\cline{5-5}
 & \multirow{2}{*}{\colorbox{lightgray!30}{$2^{j+6}$}} & \colorbox{brown!30}{$2^{h}$}, \colorbox{violet!30}{$2^{j+5}$} \newline (We can combine $2^{j+5}$ with anything) & \colorbox{brown!30}{$2^{h}$} + \colorbox{violet!30}{$2^{j+5}$} \newline (Note that $h, i < j+5$, and that $2^h + 2^i$ is the new ancestor term, also $2^{j+5}$ has so far only been used with $2^h$,
 so we can combine it later with $2^i$ to form a new term, 
 and the ancestor property is satisfied in the output atoms) & \multirow{4}{*}{U}\\
\cline{3-3}
\cline{4-4}
 &  & \mbox{None} & $2^{r+13}$ & \\
\cline{2-2}
\cline{3-3}
\cline{4-4}
 & \multirow{2}{*}{\colorbox{darkgray!30}{$2^{j+7}$}} & \colorbox{lightgray!30}{$2^{j+6}$} & $2^{r+14}$ & \\
\cline{3-3}
\cline{4-4}
 &  & \colorbox{lightgray!30}{$2^{j+6}$} & $2^{r+15}$ & \\
\cline{1-1}
\cline{2-2}
\cline{3-3}
\cline{4-4}
\cline{5-5}
 &  & \colorbox{lightgray!30}{$2^{j+6}$}, \colorbox{darkgray!30}{$2^{j+7}$} \newline (For the next atom) &  & \\
\hline
\end{longtable}

\end{document}